\theoremstyle{plain}
\newtheorem{theorem}{Theorem}[section]
\newtheorem{proposition}[theorem]{Proposition}
\newtheorem{lemma}[theorem]{Lemma}
\newtheorem{corollary}[theorem]{Corollary}
\theoremstyle{definition}
\theoremstyle{remark}
\newtheorem{remark}[theorem]{Remark}
\newtheorem{example}{Example}
\title{Edge-regular graphs with regular cliques}
\author{Gary R. W. Greaves\thanks{Supported by  the Singapore
Ministry of Education Academic Research Fund (Tier 1);  grant number:  RG127/16.}\\
  School of Physical and Mathematical Sciences, \\
  Nanyang Technological University, \\
   21 Nanyang Link, Singapore 637371\\
  {\tt grwgrvs@gmail.com}
\and
  Jack H. Koolen\thanks{Partially supported by the National Natural Science Foundation of China
(No. 11471009 and No.11671376).}\\
  Wen-Tsun Wu Key Laboratory of CAS,\\
  School of Mathematical Sciences, \\
  University of Science and Technology of China, \\
  Hefei, Anhui, 230026, P.R. China\\
  {\tt koolen@ustc.edu.cn}
}
\begin{document}
	
\maketitle

\begin{abstract}
	We exhibit infinitely many examples of edge-regular graphs that have regular cliques and that are not strongly regular.
	This answers a question of Neumaier from 1981.
\end{abstract}

\section{Introduction}

In this paper, all graphs are finite, without loops or multiple edges.
For a graph $\Gamma$ and a vertex $v$ of $\Gamma$, we use $\Gamma(v)$ to denote the set of neighbours of $v$ in $\Gamma$.
A non-empty $k$-regular graph $\Gamma$ on $N$ vertices is called \textbf{edge-regular} if there exists a constant $\lambda$ such that every pair of adjacent vertices has precisely $\lambda$ common neighbours.
The quantities $N$, $k$, and $\lambda$ are called the \textbf{parameters} of $\Gamma$ and are usually written as the triple $(N,k,\lambda)$.

A \textbf{strongly regular graph} is defined to be an edge-regular graph with parameters $(N,k,\lambda)$ such that every pair of non-adjacent vertices has precisely $\mu$ common neighbours.
The \textbf{parameters} of a strongly regular graph are given by the quadruple $(N,k,\lambda,\mu)$.
A clique $\mathcal C$ is called \textbf{regular} if every vertex not in $ \mathcal C$ is adjacent to a constant number $e>0$ of vertices in $\mathcal C$.
The value $e$ is called the \textbf{nexus} and the clique $\mathcal C$ is called \textbf{$e$-regular}.

We are concerned with edge-regular graphs that have regular cliques.
Neumaier ~\cite[Corollary 2.4]{Neu:regCliques} showed that an edge-regular, vertex-transitive, edge-transitive graph that has a regular clique must be strongly regular.
%
He further posed the following question.

\textbf{Question~\cite[Page 248]{Neu:regCliques}:}
Is every edge-regular graph with a regular clique strongly regular?

We answer this question in the negative, exhibiting infinite families of edge-regular graphs that are not strongly regular and that have regular cliques (see Section~\ref{sec:conclusion}).
Our graphs are Cayley graphs and hence they are vertex transitive.
For background on Cayley graphs and vertex- and edge-transitivity, we refer the reader to Godsil and Royle's book~\cite{God01}.

We also point out that Goryainov and Shalaginov~\cite{Goryainov14:Deza} found four non-isomorphic examples of edge-regular graphs with parameters $(24,8,2)$ that are not strongly regular and each have a $1$-regular clique.

In Section~\ref{sec:construction}, we introduce a parameterised Cayley graph and determine some of its properties
In Section~\ref{sec:sufficient_conditions_for_edge_regularity} and Section~\ref{sec:notSRGs}, we focus on specific parameters for the Cayley graph and find conditions that guarantee the graph is edge-regular and not strongly regular, respectively.
Finally, in Section~\ref{sec:conclusion}, we give parameters for our Cayley graphs that produce infinite families of graphs pertinent to Neumaier's question. 

\section{A parametrised Cayley graph} 
\label{sec:construction}

First we fix some notation.
Denote by $\mathbb Z_r$, the ring of integers modulo $r$, and by $\mathbb F_q$, the finite field of $q$ elements (where $q$ is a prime power).
Given an additive group $\mathfrak G$, we use $\mathfrak G^*$ to denote the subset of elements of $\mathfrak G$ that are not equal to the identity.
Let $G_{l,m,q} = \mathbb Z_l \oplus \mathbb Z_2^m \oplus \mathbb F_q$ where $l$ and $m$ are positive integers, and $q$ is a prime power.
Fix a primitive element $\rho$ of $\mathbb F_q$.
Next we set up a generating set for $G_{l,m,q}$.
Define the subset $S_0$ of $G_{l,m,q}$ as 
\[
	S_0 := \{ (g,0) \;|\; g \in (\mathbb Z_l \oplus \mathbb Z_2^m)^* \}.
\]
Let $\pi : (\mathbb Z_2^m)^* \to \mathbb Z_{2^m-1}$ be a bijection.
For each $z \in (\mathbb Z_2^m)^*$, define
\[
	S_{z,\pi} := \{ (0, z, \rho^j) \; | \; j \equiv \pi(z) \pmod {2^m-1} \} \subset G_{l,m,q}.
\]
Now define $S(\pi) := S_0 \cup \bigcup_{z \in (\mathbb Z_2^m)^*} S_{z,\pi}$.
Observe that $S(\pi)$ is a generating set for $G_{l,m,q}$.
We consider the Cayley graph of the (additive) group $G_{l,m,q}$ with generating set $S(\pi)$, which we denote by $\operatorname{Cay}(G_{l,m,q},S(\pi))$.
This parametrised Cayley graph is the main object of this paper.

\begin{lemma}\label{lem:undirected}
	Let $l$ and $m$ be positive integers, let $\pi : (\mathbb Z_2^m)^* \to \mathbb Z_{2^m-1}$ be a bijection, set $n = 2^m-1$, and let $q \equiv 1 \pmod{2n}$ be a prime power.
	Then $\operatorname{Cay}(G_{l,m,q},S(\pi))$ is an undirected graph.
\end{lemma}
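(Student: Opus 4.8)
The plan is to use the standard characterisation: a Cayley graph $\operatorname{Cay}(\mathfrak G, S)$ of an additive group $\mathfrak G$ is undirected precisely when its connection set is symmetric, i.e.\ $S = -S$. So the entire task reduces to verifying that $S(\pi) = -S(\pi)$. Since $S(\pi)$ is the union of $S_0$ with the sets $S_{z,\pi}$, $z \in (\mathbb Z_2^m)^*$, and these pieces are distinguished by their $\mathbb Z_2^m$-coordinate ($0$ for $S_0$, and $z$ for $S_{z,\pi}$), it is enough to show separately that $-S_0 = S_0$ and that $-S_{z,\pi} = S_{z,\pi}$ for each $z$.

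The part $S_0$ is immediate: $-(g,0) = (-g,0)$, and $g \mapsto -g$ is a bijection of $\mathbb Z_l \oplus \mathbb Z_2^m$ fixing the identity, hence it permutes $(\mathbb Z_l \oplus \mathbb Z_2^m)^*$; therefore $-S_0 = S_0$. For $S_{z,\pi}$ the key observation is that $\mathbb Z_2^m$ has exponent $2$, so $-(0,z,\rho^j) = (0, z, -\rho^j)$ and only the third coordinate requires attention. This is where I would bring in the hypothesis $q \equiv 1 \pmod{2n}$ with $n = 2^m - 1$. First, $q$ is odd, so $-1 = \rho^{(q-1)/2}$ in $\mathbb F_q$, which gives $-\rho^j = \rho^{\,j + (q-1)/2}$. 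Second — and this is the only nontrivial point — from $2n \mid q-1$ we get $n \mid (q-1)/2$, so $j + (q-1)/2 \equiv j \equiv \pi(z) \pmod{n}$. Hence $-\rho^j$ is again of the form $\rho^{j'}$ with $j' \equiv \pi(z) \pmod{n}$, i.e.\ $-(0,z,\rho^j) \in S_{z,\pi}$. As negation is an involution, this yields $-S_{z,\pi} = S_{z,\pi}$, and assembling the pieces gives $-S(\pi) = S(\pi)$, proving the lemma.

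One may additionally note in passing that $S(\pi)$ contains no identity element — every element of $S_0$ has nonzero first-two coordinates and every element of $S_{z,\pi}$ has $z \neq 0$ — so the object is genuinely a simple graph without loops; but the only real content is the symmetry computation above, whose sole nonroutine ingredient is the divisibility $n \mid (q-1)/2$ distilled from $q \equiv 1 \pmod{2n}$. I do not anticipate any genuine obstacle.
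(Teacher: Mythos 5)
Your proof is correct and follows essentially the same route as the paper: reduce to symmetry of $S(\pi)$, handle $S_0$ trivially, and for each $S_{z,\pi}$ use $-1 = \rho^{(q-1)/2}$ together with $n \mid (q-1)/2$ (the paper writes $q = 2nr+1$ and $-1 = \rho^{nr}$, which is the same computation). No issues.
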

\begin{proof}
	Write $q = 2nr+1$ for some $r$.
	It suffices to show that the set $S$ is symmetric, that is, for all $g \in S$ we have $-g \in S$.
	It is easy to see that $S_0$ is symmetric.
	For each $z \in (\mathbb Z_2^m)^*$, the set $S_{z,\pi}$ is symmetric since $-1 = \rho^{nr}$.
	Indeed, suppose $(0,z,\rho^j) \in S_{z,\pi}$, then its inverse, $(0,z,-\rho^j) = (0,z,\rho^{j+nr})$ is also in $S_{z,\pi}$.
\end{proof}

Fix a prime power, $q$, and positive integers $l$ and $m$.
For each $f \in \mathbb F_q$, define the set $\mathcal C_f := \{ (g, f) : g \in \mathbb Z_l \oplus \mathbb Z_2^m \} \subset G_{l,m,q}$.
A set of cliques of a graph $\Gamma$ that partition the vertex set of $\Gamma$ is called a \textbf{spread} in $\Gamma$.
Soicher~\cite{Soi:CAB15} studied edge-regular graphs that have spreads of regular cliques.
The next lemma shows that the graph $\operatorname{Cay}(G_{l,m,q},S(\pi))$ has a spread of regular cliques.

\begin{lemma}\label{lem:regClique}
	Let $l$ and $m$ be positive integers, let $\pi : (\mathbb Z_2^m)^* \to \mathbb Z_{2^m-1}$ be a bijection, and let $q$ be a prime power.
	Then the set $\{ \mathcal C_f : f \in \mathbb F_q \}$ is a spread of $1$-regular cliques in $\operatorname{Cay}(G_{l,m,q},S(\pi))$.
\end{lemma}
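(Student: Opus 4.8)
The spread part comes for free: the sets $\mathcal{C}_f$ are precisely the cosets in $G_{l,m,q}$ of the subgroup $\mathcal{C}_0=\{(g,0):g\in\mathbb Z_l\oplus\mathbb Z_2^m\}$, so they partition the vertex set. For the clique part I would simply note that two distinct vertices $(g,f),(g',f)\in\mathcal{C}_f$ differ by $(g-g',0)$ with $g-g'\in(\mathbb Z_l\oplus\mathbb Z_2^m)^*$, hence by an element of $S_0\subseteq S(\pi)$; so $\mathcal{C}_f$ induces a complete subgraph. The real content is the $1$-regularity, and the plan there is a direct counting argument.

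Fix $f\in\mathbb F_q$ and a vertex $v=(h,f')\notin\mathcal{C}_f$, so that $f'\neq f$. I want to count the vertices $(g,f)\in\mathcal{C}_f$ adjacent to $v$, i.e.\ with $(g,f)-(h,f')=(g-h,\,f-f')\in S(\pi)$. Since $f-f'\neq 0$, such an element cannot lie in $S_0$, so it must lie in $S_{z,\pi}$ for some $z\in(\mathbb Z_2^m)^*$; unwinding the definition of $S_{z,\pi}$, this is equivalent to saying that the $\mathbb Z_l$-coordinate of $g-h$ is $0$, its $\mathbb Z_2^m$-coordinate equals $z$, and $f-f'=\rho^{j}$ for some $j$ with $j\equiv\pi(z)\pmod{2^m-1}$. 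Now I read these conditions backwards. Because $\rho$ is primitive and $f-f'\neq 0$, write $f-f'=\rho^{k}$; the last condition then says $\pi(z)\equiv k\pmod{2^m-1}$, which, since $\pi$ is a bijection onto $\mathbb Z_{2^m-1}$, determines $z=\pi^{-1}(k\bmod(2^m-1))\in(\mathbb Z_2^m)^*$ uniquely. The first two conditions then determine $g$ uniquely: its $\mathbb Z_l$-coordinate is that of $h$, and its $\mathbb Z_2^m$-coordinate is that of $h$ plus $z$. A quick check confirms that this $g$ does give $(g-h,f-f')\in S_{z,\pi}$, so $v$ has exactly one neighbour in $\mathcal{C}_f$; hence the nexus is $e=1>0$ and $\mathcal{C}_f$ is a $1$-regular clique.

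I expect the only thing needing care is the coordinatewise bookkeeping across the three summands of $G_{l,m,q}$, and in particular the key point that the $\mathbb F_q$-coordinate $f-f'$ alone forces $z$ via $\pi^{-1}$ — this is exactly where the bijectivity of $\pi$ enters and why the nexus is precisely $1$ rather than merely constant. A couple of minor remarks are worth including: $\mathbb Z_2^m$ has exponent $2$, so there is no sign ambiguity when matching the $\mathbb Z_2^m$-coordinate; and if one is fussy about the orientation convention for $\operatorname{Cay}$, the identical computation with $f'-f=\rho^{k'}$ shows the count of vertices $(g,f)$ with $v-(g,f)\in S(\pi)$ is also $1$, so the conclusion does not depend on the convention (and is consistent with Lemma~\ref{lem:undirected} in the range $q\equiv 1\pmod{2(2^m-1)}$ where the graph is genuinely undirected).
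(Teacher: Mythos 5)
Your proof is correct and follows essentially the same route as the paper's: the cosets $\mathcal{C}_f$ partition the vertices, pairwise differences within a coset lie in $S_0$, and a vertex outside $\mathcal{C}_f$ is joined to it by the unique generator whose $\mathbb F_q$-coordinate is the prescribed nonzero value (your $\pi^{-1}$ step just makes explicit why that generator is unique). The extra remarks on the undirected convention and the $\mathbb Z_2^m$ exponent are fine but not needed.
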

\begin{proof}
	Let $\Gamma = \operatorname{Cay}(G_{l,m,q},S(\pi))$.
	First, it is clear that the set $\{ \mathcal C_f : f \in \mathbb F_q \}$ is a partition of the vertex set of $\Gamma$.
	
	Fix an element $f \in \mathbb F_q$ and take two distinct elements $x$ and $y$ in $\mathcal C_f$.
	Since the difference $x - y$ is in $S_0$, we have that $x$ and $y$ are adjacent in $\Gamma$.
	
	Now we show that each vertex of $\Gamma$ outside $\mathcal C_f$ is adjacent to precisely one vertex of $\mathcal C_f$.
	Let $v = (g_1,g_2,g_3)$ 
	with $(g_1,g_2) \in \mathbb Z_l \oplus \mathbb Z_2^m$ and $g_3 \in \mathbb F_q$, 
	be a vertex not in the clique $\mathcal C_f$.
	Then $g_3 \ne f$.
	The vertex $v$ is adjacent to a vertex of the clique $\mathcal C_f$ via the unique generator $(s_1,s_2,s_3) \in S(\pi)$ with $s_3 = -g_3+f$.
	Hence each vertex not in $\mathcal C_f$ is adjacent to precisely one vertex of $\mathcal C_f$, as required.
\end{proof}

Next, given a finite field $\mathbb F_q$ with $q = 2n r+1$ and a fixed primitive element $\rho$, define, for each $i \in \{0,\dots,n-1\}$, the $n$th \textbf{cyclotomic class} $C^n_q(i)$ of $\mathbb F_q$ as
\[
	C^n_q(i) := \{ \rho^{nj+i} \; | \; j \in \{ 0,\dots,2r-1 \} \}.
\]
For $a, b \in \{0,\dots,n-1\}$, the $n$th \textbf{cyclotomic number} $c^n_q(a,b)$ is defined as $c^n_q(a,b) := |(C^n_q(a)+1)\cap C^n_q(b)|$.
We refer to \cite{MacWilliams72} for background on cyclotomic numbers.
 
%

Our next result is about the number of common neighbours of two adjacent vertices of $\operatorname{Cay}(G_{l,m,q},S(\pi))$.

\begin{lemma}\label{lem:valencies}
	Let $l$ and $m$ be positive integers, let $\pi : (\mathbb Z_2^m)^* \to \mathbb Z_{2^m-1}$ be a bijection, and let $q$ be a prime power.
	Let $\Gamma = \operatorname{Cay}(G_{l,m,q},S(\pi))$ and let $v$ be a vertex of $\Gamma$.
	Then the subgraph of $\Gamma$ induced on $\Gamma(v)$ has valencies $2^m l-2$ and 
	$$
		\sum_{\substack{h \in (\mathbb Z^m_2)^* \\ h \ne g}}c^n_q(\pi(h-g)-\pi(g),\pi(h)-\pi(g)), \text{ for each } g \in (\mathbb Z^m_2)^*.
	$$
\end{lemma}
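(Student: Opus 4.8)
The plan is to fix a vertex $v$ of $\Gamma = \operatorname{Cay}(G_{l,m,q},S(\pi))$ and, by vertex-transitivity of Cayley graphs, assume $v = 0$. Then $\Gamma(v) = S(\pi) = S_0 \cup \bigcup_{z} S_{z,\pi}$, and the valency of a generator $s \in S(\pi)$ inside the induced subgraph on $S(\pi)$ is the number of $s' \in S(\pi)$ with $s - s' \in S(\pi)$. The set $S(\pi)$ naturally splits according to the $\mathbb Z_2^m$-coordinate: $S_0$ has coordinate $0$, and $S_{z,\pi}$ has coordinate $z \in (\mathbb Z_2^m)^*$. I would first observe that the induced subgraph respects a partition into the part coming from $S_0$ and the parts coming from each $S_{z,\pi}$; since $|S_0| = 2^m l - 1$ and $|S_{z,\pi}| = 2r$ (one element for each $j$ in a fixed residue class mod $n$, over the $2nr$ nonzero field elements), a count of $|S(\pi)|$ follows, though that is not needed here.

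First I would compute the valency of a vertex in the $S_0$-part. Take $s = (g,0)$ with $g \in (\mathbb Z_l \oplus \mathbb Z_2^m)^*$. A neighbour $s' \in S(\pi)$ satisfies $s - s' \in S(\pi)$. If $s' = (g',0) \in S_0$, then $s - s' = (g - g', 0)$, which lies in $S_0$ precisely when $g' \neq g$, giving $2^m l - 2$ choices. If instead $s' \in S_{z,\pi}$ for some $z \neq 0$, then $s - s'$ has $\mathbb Z_2^m$-coordinate $-z = z \neq 0$ but $\mathbb Z_l$-coordinate $g_1$ and $\mathbb F_q$-coordinate $-s'_3 \neq 0$, so $s - s'$ would have to lie in $S_{z,\pi}$, forcing its $\mathbb Z_l$-coordinate to be $0$; but then $s = s' + (s-s')$ has $\mathbb F_q$-coordinate $s'_3 + (s_3 - s'_3) \neq 0$ unless... more carefully, $s$ has $\mathbb F_q$-coordinate $0$ while both $s'$ and $s - s'$ have nonzero $\mathbb F_q$-coordinates in the cyclotomic class indexed by $\pi(z)$, and their sum is $0$, which is possible; so one must check the $\mathbb Z_l$ and $\mathbb Z_2^m$ coordinates instead. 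The $\mathbb Z_2^m$-coordinate of $s$ is the $\mathbb Z_2^m$-part of $g$, which must equal $z + z = 0$, so $g$ has zero $\mathbb Z_2^m$-part; and the $\mathbb Z_l$-coordinates give the $\mathbb Z_l$-part of $g$ equal to $0 + 0 = 0$ — contradicting $g \neq 0$. Hence no cross-neighbours exist and the valency is exactly $2^m l - 2$, as claimed.

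Next I would compute the valency of a vertex in the $S_{g,\pi}$-part for fixed $g \in (\mathbb Z_2^m)^*$. Take $s = (0,g,\rho^a) \in S_{g,\pi}$ with $a \equiv \pi(g) \pmod n$. A neighbour $s' = (0,h,\rho^b) \in S_{h,\pi}$ (the $\mathbb Z_2^m$-coordinate of $s$ is $g \neq 0$, so no neighbour lies in $S_0$, by an argument like the one above) requires $s - s' = (0, g - h, \rho^a - \rho^b) \in S(\pi)$; since $g - h$ may be zero or not, I split into two cases. If $h = g$: then $s - s' = (0,0,\rho^a - \rho^b)$, which lies in $S_0$ only if its $\mathbb Z_l \oplus \mathbb Z_2^m$-part is a nonzero element — but it is $0$ — so $h = g$ contributes nothing (consistent with the sum omitting $h = g$). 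If $h \neq g$: then $g - h \in (\mathbb Z_2^m)^*$ and $s - s' \in S_{g-h,\pi}$ iff $\rho^a - \rho^b \in C^n_q(\pi(g-h))$. Writing $\rho^b \in C^n_q(\pi(h))$ and $\rho^a \in C^n_q(\pi(g))$, the number of valid $s'$ for a given $h$ is the number of $b$ in the class $\pi(h) \bmod n$ with $\rho^a - \rho^b$ in the class $\pi(g-h) \bmod n$. Dividing through by $\rho^a \in C^n_q(\pi(g))$, this becomes the number of elements $x \in C^n_q(\pi(h) - \pi(g))$ with $1 - x \in C^n_q(\pi(g-h) - \pi(g))$, i.e. $|(1 - C^n_q(\pi(h)-\pi(g))) \cap C^n_q(\pi(g-h)-\pi(g))|$. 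Since $-1 \in C^n_q(0)$ (as $q \equiv 1 \pmod{2n}$ makes $-1$ an $n$th power), $1 - C^n_q(i) = -(C^n_q(i) - 1) = C^n_q(i') - 1$ after shifting the index, and one identifies this cardinality with the cyclotomic number $c^n_q(\pi(h-g) - \pi(g), \pi(h) - \pi(g))$ via the definition $c^n_q(a,b) = |(C^n_q(a)+1) \cap C^n_q(b)|$ together with $h - g = g - h$ in $\mathbb Z_2^m$. Summing over all $h \in (\mathbb Z_2^m)^* \setminus \{g\}$ gives the stated formula.

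The main obstacle is the bookkeeping in this last step: correctly tracking how the field automorphism $x \mapsto x/\rho^a$ shifts cyclotomic-class indices, and reconciling $1 - C^n_q(\cdot)$ with $C^n_q(\cdot) + 1$ using $-1 \in C^n_q(0)$ and the identity $h - g = g - h$ over $\mathbb Z_2^m$, so that the count lands on exactly $c^n_q(\pi(h-g)-\pi(g), \pi(h)-\pi(g))$ rather than some reindexed variant. Everything else — the splitting of $S(\pi)$ by $\mathbb Z_2^m$-coordinate and the verification that $S_0$-vertices have no neighbours in the $S_{z,\pi}$-parts and vice versa — is routine coordinate-chasing.
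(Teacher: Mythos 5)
Your proposal is correct and follows essentially the same route as the paper's (much terser) proof: reduce to $v=0$ by vertex-transitivity, observe that $S_0$ induces a clique of valency $2^m l-2$ with no edges to the parts $S_{z,\pi}$, and count the neighbours of $(0,g,\rho^a)$ in each $S_{h,\pi}$ by normalising by $\rho^a$ and using $-1\in C^n_q(0)$ together with $h-g=g-h$ in $\mathbb Z_2^m$ to identify the count with $c^n_q(\pi(h-g)-\pi(g),\pi(h)-\pi(g))$. The one wobbly spot is your verification that an $S_0$-vertex has no neighbours in $\bigcup_z S_{z,\pi}$, where you assume the difference $s-s'$ would lie in $S_{z,\pi}$ with the \emph{same} index $z$ before justifying it (this does follow, since $-1\in C^n_q(0)$ forces the class indices to agree, but the cleanest fix is to note that the claim is immediate from Lemma~\ref{lem:regClique}: $S_0\cup\{0\}$ is the $1$-regular clique $\mathcal C_0$ and each $s'\in S_{z,\pi}$ is already adjacent to $0$).
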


\begin{proof}
	Since $\Gamma$ is vertex transitive, we can assume that $v$ is the identity of $G_{l,m,q}$.
	The neighbours of $v$ are then the elements of $S(\pi)$.
	Recall that $S(\pi) = S_0 \cup \bigcup_{z \in (\mathbb Z_2^m)^*} S_{z,\pi}$.
	Each element of the set $S_0$ is adjacent to every other element of $S_0$ and no elements of $S(\pi) \backslash S_0$.
	Since $S_0$ has cardinality $2^m l-1$, we have the first part of the lemma.
	
	Take an element $s \in S(\pi) \backslash S_0$.
	Then $s = (0,g,\rho^{j})$ for some $g \in (\mathbb Z_2^m)^*$ and $j \equiv \pi(g) \pmod{2^m-1}$.
	For each $h \in (\mathbb Z^m_2)^*$ with $h \ne g$, the number of neighbours of $s$ in $S_{h,\pi}$ is
	\[
		c^n_q(\pi(h-g)-\pi(g),\pi(h)-\pi(g))
	\]
	as required.
\end{proof}

The final result in this section provides a formula that we will use to count the number of common neighbours of two nonadjacent vertices.

\begin{proposition}\label{pro:mu}
	Let $l$ and $m$ be positive integers, let $\pi : (\mathbb Z_2^m)^* \to \mathbb Z_{2^m-1}$ be a bijection, and let $q$ be a prime power.
	Let $\Gamma = \operatorname{Cay}(G_{l,m,q},S(\pi))$ and let $v$ and $w$ be the vertices of $\Gamma$ corresponding to $(0,0,0) \in G_{l,m,q}$ and $(0,g,\rho) \in G_{l,m,q}$, respectively, where $\pi(g) \ne 1$.
	Then the number of common neighbours of $v$ and $w$ is
	\[
		2 + \sum_{\substack{h \in (\mathbb Z_2^m)^* \\ h \ne g}} c_q^{2^m-1}(\pi(h)-1,\pi(h+g)-1).
	\]
\end{proposition}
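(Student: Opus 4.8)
The plan is to exploit the vertex-transitivity that has already been used in Lemma~\ref{lem:valencies}, but now with a pair of non-adjacent vertices whose difference is $(0,g,\rho)$. First I would note that, since $\pi(g)\ne 1$, the element $(0,g,\rho)$ is not in $S(\pi)$ (the generators of the form $(0,g,\rho^j)$ require $j\equiv\pi(g)$, and $\rho^1$ forces $j=1$), so $v$ and $w$ are indeed non-adjacent and the count is a genuine $\mu$-type quantity. A common neighbour of $v$ and $w$ is a vertex $x$ with $x\in S(\pi)$ and $x-(0,g,\rho)\in S(\pi)$; equivalently, writing $w$'s coordinate shift out, $x$ and $x-w$ both lie in $S(\pi)$.

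**Splitting by which part of the generating set $x$ lies in:**

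Next I would partition the common neighbours according to whether $x\in S_0$ or $x\in S_{z,\pi}$ for some $z\in(\mathbb Z_2^m)^*$. If $x=(a,0)\in S_0$ with $a\in(\mathbb Z_l\oplus\mathbb Z_2^m)^*$, then $x-w = (a - (0,g), -\rho)$ has a nonzero last coordinate, so it can lie in $S(\pi)$ only by being a generator of the form $(0,h,\rho^j)$; this forces the $\mathbb Z_l$-part of $a$ to vanish and the $\mathbb Z_2^m$-part to be $h = (\text{something})+g$ with the third coordinate $-\rho = \rho^{1+nr}$ having exponent congruent to $\pi(h)$ mod $n$. Since $-\rho$ has exponent $\equiv 1 \pmod n$, this needs $\pi(h)\equiv 1$, i.e.\ $h = \pi^{-1}(1)$; one checks this gives exactly one valid $x$. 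A symmetric count handles $x$ with $x-w\in S_0$: here $x$ itself must be a generator $(0,h',\rho^{j'})$ with third coordinate exponent $\equiv 1 \pmod n$, again forcing $h' = \pi^{-1}(1)$ and yielding exactly one $x$. One must check these two special common neighbours are distinct from each other (they are, since in the first $x\in S_0$ and in the second $x\notin S_0$) — this accounts for the ``$2+$'' in the formula.

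**The main sum — the generic case:**

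The remaining common neighbours have $x=(0,a,\rho^i)\in S_{a,\pi}$ with $i\equiv\pi(a)\pmod n$ and $x-w = (0,a-g,\rho^i-\rho)\in S_{a-g,\pi}$, so we need $a\ne g$ (else the second coordinate is $0$) and the exponent of $\rho^i-\rho = \rho(\rho^{i-1}-1)$ must be $\equiv\pi(a-g)\pmod n$. Writing $h := a$ (so $h$ ranges over $(\mathbb Z_2^m)^*\setminus\{g\}$), the condition ``$\rho^i-\rho$ has exponent $\equiv\pi(h-g)\pmod n$'' together with ``$i\equiv\pi(h)\pmod n$'' should, after factoring out $\rho$ and substituting, translate into a cyclotomic-number count of pairs $(\rho^{i-1},\rho^{i-1}-1)$ lying in prescribed cyclotomic classes — precisely $c_q^{n}(\pi(h)-1,\,\pi(h+g)-1)$, using $h-g=h+g$ in $\mathbb Z_2^m$. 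Summing over the admissible $h$ gives the stated sum. The main obstacle, and the step requiring care, is the bookkeeping of the cyclotomic indices: tracking which residue mod $n$ plays the role of the ``shift'' versus the ``target'' class, getting the ``$-1$'' offsets right from the factor $\rho$, and confirming that the substitution $e = C_q^n(i)+1$ etc.\ lands one exactly in the definition $c^n_q(a,b)=|(C^n_q(a)+1)\cap C^n_q(b)|$ rather than some transpose or complement of it. I would verify this index computation carefully, perhaps cross-checking against Lemma~\ref{lem:valencies}'s analogous (adjacent-vertex) computation, since the structure of the argument there is the same but with $\rho$ replaced by a difference of two generators.
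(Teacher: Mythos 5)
Your proposal is correct and follows essentially the same decomposition as the paper's proof: the two exceptional common neighbours (one in $S_0$ and one in $w+S_0$, each unique because $\pi(g)\ne 1$ and $-1=\rho^{nr}$), plus a cyclotomic count for the generic case. The index worry you flag at the end is real but harmless: parametrising by the class $h=a$ of $x$ rather than by the class of $x-w$ (as the paper does) yields the transposed term $c^{n}_q(\pi(h+g)-1,\pi(h)-1)$, but since $h\mapsto h+g$ is an involution of $(\mathbb Z_2^m)^*\setminus\{g\}$ the total sum is unchanged (and in any case $c^n_q(a,b)=c^n_q(b,a)$ here because $(q-1)/n$ is even).
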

\begin{proof}
	The neighbours of $v$ are precisely the elements of $S(\pi)$.  
	The vertices $v$ and $w$ have precisely $1$ common neighbour in $S_0$ and precisely $1$ common neighbour in $\{ (g+h,\rho) \;|\; h \in (\mathbb Z_l \oplus \mathbb Z_2^m)^* \}$.
	The remaining common neighbours of $v$ and $w$ are elements of the form $(0,z,\rho^i)$ such that $z=g+h$ for some $h \in (\mathbb Z_2^m)^*\backslash \{ g \}$, and $i \equiv \pi(z) \pmod {2^m -1}$, $j \equiv \pi(h) \pmod {2^m -1}$, and $\rho^i \equiv \rho+\rho^j$.
	The number of such elements is equal to
	\[
		\sum_{\substack{h \in (\mathbb Z_2^m)^* \\ h \ne g}} c_q^{2^m-1}(\pi(h)-1,\pi(h+g)-1).
	\]
\end{proof}

%
%



\section{Sufficient conditions for edge-regularity} 
\label{sec:sufficient_conditions_for_edge_regularity}

Next we find sufficient conditions to guarantee that the graph $\operatorname{Cay}(G_{l,m,q},S(\pi))$ is edge-regular for $m=3$ and $m=2$.

\subsection{Edge-regularity for the graphs $\operatorname{Cay}(G_{l,3,q},S(\pi))$} 
\label{sub:m_3}

Here we focus on $m=3$.
We will need the following equalities for the $7$th cyclotomic numbers.

\begin{lemma}[See \cite{MacWilliams72}]\label{lem:cycEq7}
	Let $q \equiv 1 \pmod{14}$ be a prime power.
	Then
	\begin{enumerate}[(i)]
		\item $c^7_q(1,3) = c^7_q(6,2) = c^7_q(5,4)$,
		\item $c^7_q(1,5) = c^7_q(6,4) = c^7_q(3,2)$, and
		\item $c^7_q(a,b) = c^7_q(b,a)$ for all $a,b \in \{0,\dots,6\}$. 
	\end{enumerate}
\end{lemma}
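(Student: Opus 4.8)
The plan is to derive all three parts from two elementary reduction identities for cyclotomic numbers, each established by an explicit bijection, and then to chain these identities together. Throughout, write $q = 14r+1$, so that each class $C^7_q(i)$ has cardinality $f := 2r$, and read all class indices modulo $7$. The starting observation is that $-1 \in C^7_q(0)$: indeed $-1 = \rho^{(q-1)/2} = \rho^{7r}$ and $7r \equiv 0 \pmod{7}$ (this is the only place the hypothesis $q \equiv 1 \pmod{14}$, rather than just $q \equiv 1 \pmod{7}$, is needed). Hence multiplication by $-1$ fixes every class $C^7_q(i)$, while multiplication by an element of $C^7_q(a)$ maps $C^7_q(b)$ bijectively onto $C^7_q(a+b)$.

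The first reduction identity is $c^7_q(a,b) = c^7_q(-a,\,b-a)$ (indices mod $7$). To see this, observe that $c^7_q(a,b)$ is the number of $x \in C^7_q(a)$ with $x+1 \in C^7_q(b)$; such an $x$ is automatically nonzero, and $x = -1$ cannot occur because then $x+1 = 0$ lies in no class. The map $x \mapsto x^{-1}$ carries this set bijectively onto $\{\, y \in C^7_q(-a) : y+1 \in C^7_q(b-a)\,\}$, since $x \in C^7_q(a)$ iff $x^{-1} \in C^7_q(-a)$, and $x+1 \in C^7_q(b)$ iff $x^{-1}+1 = x^{-1}(x+1) \in C^7_q(b-a)$. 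The second reduction identity is $c^7_q(a,b) = c^7_q(b,a)$, which is already part (iii): here the involution $x \mapsto -(x+1)$ does the job, because $-(x+1) \in C^7_q(b)$ and $-(x+1)+1 = -x \in C^7_q(a)$, both using $-1 \in C^7_q(0)$.

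With these two identities available, parts (i) and (ii) are pure bookkeeping. For (i): the first identity gives $c^7_q(1,3) = c^7_q(6,2)$; then (iii) gives $c^7_q(6,2) = c^7_q(2,6)$, and the first identity once more gives $c^7_q(2,6) = c^7_q(5,4)$. For (ii): the first identity gives $c^7_q(1,5) = c^7_q(6,4)$; then (iii) gives $c^7_q(6,4) = c^7_q(4,6)$, and the first identity gives $c^7_q(4,6) = c^7_q(3,2)$.

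I do not anticipate a genuine obstacle here; the only care needed is in checking that the two maps above really are bijections (handling the degenerate values $0$ and $-1$) and in using the parity of $f$ correctly. Alternatively, the entire lemma can be read off from the standard tables and symmetry relations for seventh cyclotomic numbers, for which one may simply cite \cite{MacWilliams72}.
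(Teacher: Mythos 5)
Your proof is correct, and it checks out in every detail against the paper's definition $c^7_q(a,b) = |(C^7_q(a)+1)\cap C^7_q(b)|$: since $q\equiv 1\pmod{14}$ forces $(q-1)/7$ to be even, $-1=\rho^{7r}\in C^7_q(0)$, your involution $x\mapsto -(x+1)$ gives (iii), your inversion map gives $c^7_q(a,b)=c^7_q(-a,b-a)$, and chaining these as you describe ($c(1,3)=c(6,2)=c(2,6)=c(5,4)$ and $c(1,5)=c(6,4)=c(4,6)=c(3,2)$) yields (i) and (ii). The paper itself offers no proof at all — the lemma is stated with a bare citation to the cyclotomy literature, exactly the fallback you mention in your last sentence. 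So your argument is genuinely different in that it is self-contained: what it buys is independence from the tables and conventions of the cited reference (which is worth something, since conventions for cyclotomic numbers vary between sources), at the modest cost of half a page of elementary verification. One tiny remark: the parity of $f$ you worry about at the end is already fully accounted for by the observation that $-1\in C^7_q(0)$, so nothing further is needed there.
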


Elements of $\mathbb Z_2$ are naturally identified with elements of the set $\{ 0,1 \} \subset \mathbb Z$; let $\hat{x}$ denote the image in $\{0,1\}$ of an element $x \in \mathbb Z_2$.
Let $\phi  : (\mathbb Z^3_2)^* \to \mathbb Z_7$ be given by $\phi (x_2,x_1,x_0) := \hat{x_0}+2\hat{x_1}+4\hat{x_2} \pmod 7$ and let $\operatorname{wt} : (\mathbb Z^3_2) \to \mathbb Z$  be given by $\operatorname{wt} (x_2,x_1,x_0) := \hat{x_0}+\hat{x_1}+\hat{x_2}$.
Define index-shifting functions $\sigma_{+}, \sigma_{-} :  (\mathbb Z^3_2) \to (\mathbb Z^3_2)$ as
$\sigma_{+} (x_2,x_1,x_0) := (x_1,x_0,x_2)$ and $\sigma_{-} (x_2,x_1,x_0) := (x_0,x_2,x_1)$.

Now we define the functions $\Psi_1, \Psi_2 : (\mathbb Z^3_2)^* \to  \mathbb Z_7$ as
\begin{align*}
	\Psi_1(\mathbf x) &= \begin{cases}
		\phi( \sigma_+(\mathbf x) ), & \text{ if } \operatorname{wt}(\mathbf x) \text{ is odd,} \\
		\phi( \sigma_-(\mathbf x) ), & \text{ if } \operatorname{wt}(\mathbf x) \text{ is even;} 
	\end{cases}  \\
	\Psi_2(\mathbf x) &= \begin{cases}
		\phi( \sigma_+(\mathbf x) + \mathbf x ), & \text{ if } \operatorname{wt}(\mathbf x) \text{ is odd,} \\
		\phi( (1,1,1) + \mathbf x ), & \text{ if } \operatorname{wt}(\mathbf x) \text{ is even.} 
	\end{cases}
\end{align*}

We use the functions $\Psi_1$ and $\Psi_2$ since they have the following nice property, which follows from Lemma~\ref{lem:cycEq7}.

\begin{proposition}\label{pro:sameCycNumbers7}
	Let $q \equiv 1 \pmod{14}$ be a prime power and let $g,h \in (\mathbb Z^3_2)^*$ with $g \ne h$.
	Then 
	\begin{enumerate}[(i)]
		\item $c^7_q(\Psi_1(h-g)-\Psi_1(g),\Psi_1(h)-\Psi_1(g)) = c^7_q(1,5)$;
		\item $c^7_q(\Psi_2(h-g)-\Psi_2(g),\Psi_2(h)-\Psi_2(g)) = c^7_q(1,3)$.
	\end{enumerate}
\end{proposition}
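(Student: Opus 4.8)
The plan is to reduce the claimed identities to the cyclotomic equalities of Lemma 2.5 by a direct case analysis on the parities of $\operatorname{wt}(g)$, $\operatorname{wt}(h)$, and $\operatorname{wt}(h-g)=\operatorname{wt}(h+g)$ (note $h-g=h+g$ in $\mathbb Z_2^3$). Since $\operatorname{wt}$ is a linear functional mod $2$ on $\mathbb Z_2^3$, we have $\operatorname{wt}(h+g)\equiv\operatorname{wt}(h)+\operatorname{wt}(g)\pmod 2$, so the three parities are not independent: the parity of $\operatorname{wt}(h+g)$ is forced by the other two. This leaves effectively four parity patterns for the pair $(\operatorname{wt}(g),\operatorname{wt}(h))$: (odd, odd), (odd, even), (even, odd), (even, even). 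In each pattern the definitions of $\Psi_1$ and $\Psi_2$ unfold into explicit affine expressions in $g$ and $h$ via $\phi$, $\sigma_+$, $\sigma_-$, and (for $\Psi_2$ in the even case) the constant $(1,1,1)$.

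First I would record the elementary facts about $\phi$: it is an additive bijection $(\mathbb Z_2^3,+)\to(\mathbb Z_7,+)$ in the sense that $\phi(x+y)=\phi(x)+\phi(y)$ whenever the binary additions don't carry — but more usefully, since we only ever form differences $\Psi_i(\cdot)-\Psi_i(\cdot)$, I would track how $\phi$ interacts with $\sigma_+$ and $\sigma_-$. The key observation is that $\sigma_+$ and $\sigma_-$ are inverse cyclic coordinate shifts, and under $\phi$ they act as multiplication by $2$ and by $4$ respectively on $\mathbb Z_7$ (since cycling the bits of a 3-bit integer is doubling mod $7$). That is, $\phi(\sigma_+(\mathbf x))=2\phi(\mathbf x)$ and $\phi(\sigma_-(\mathbf x))=4\phi(\mathbf x)$ in $\mathbb Z_7$. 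With this, each $\Psi_i$ value becomes a concrete element of $\mathbb Z_7$ expressed through $\phi(g)$, $\phi(h)$, $\phi(h+g)$, and one can substitute $b:=\phi(g)$, $c:=\phi(h)$, so that $\phi(h+g)$ is whatever it is (not simply $b+c$, because of carries) — this is the subtle point and I'd handle it by noting $\operatorname{wt}$-parity controls the carry structure enough for the argument, or simply by keeping $\phi(h+g)$ as a free symbol constrained only by $g\neq h$, $g,h\neq 0$.

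Then, in each of the four parity cases, I would compute the pair of arguments
$\bigl(\Psi_i(h-g)-\Psi_i(g),\ \Psi_i(h)-\Psi_i(g)\bigr)\in\mathbb Z_7^2$
as explicit linear combinations, and show that as $(g,h)$ ranges over the admissible pairs, this pair always lands in the orbit $\{(1,5),(6,4),(3,2)\}$ for $i=1$ and in $\{(1,3),(6,2),(5,4)\}$ for $i=2$. By Lemma 2.5(i)–(iii) every element of the first orbit gives cyclotomic number $c^7_q(1,5)$ and every element of the second gives $c^7_q(1,3)$, which is exactly the claim. I expect the main obstacle to be bookkeeping: verifying that the designed functions $\Psi_1,\Psi_2$ really were rigged so that these two arguments are "rotations" of each other by the multiplication-by-$2$ symmetry of $\mathbb Z_7$ (the cyclotomic numbers $c^7_q(a,b)$ are invariant under simultaneously doubling $a$ and $b$, since that corresponds to applying the Frobenius-like automorphism $\rho\mapsto\rho^2$ of the cyclotomic scheme), and then checking that the $\operatorname{wt}$-even versus $\operatorname{wt}$-odd branch definitions were chosen precisely to keep the result inside a single doubling-orbit across all cases. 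Once the reduction "(arguments) $\in$ doubling-orbit of $(1,5)$ (resp.\ $(1,3)$)" is established, invoking Lemma 2.5 closes it immediately; I would present the four-case table compactly rather than writing out all the arithmetic.
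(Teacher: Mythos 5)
Your overall reduction is the right one (and is all the paper itself does --- Proposition~\ref{pro:sameCycNumbers7} is stated with no proof beyond ``follows from Lemma~\ref{lem:cycEq7}''): show that the argument pair always lies in the six-pair equivalence class $\{(1,5),(5,1),(6,4),(4,6),(3,2),(2,3)\}$ for $\Psi_1$, respectively $\{(1,3),(3,1),(6,2),(2,6),(5,4),(4,5)\}$ for $\Psi_2$, and then quote Lemma~\ref{lem:cycEq7}. Your identities $\phi(\sigma_+(\mathbf x))=2\phi(\mathbf x)$ and $\phi(\sigma_-(\mathbf x))=4\phi(\mathbf x)$ are also correct and useful. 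But the mechanism you propose for establishing membership has a genuine gap. You cannot ``keep $\phi(h+g)$ as a free symbol constrained only by $g\neq h$, $g,h\neq 0$'': the statement is \emph{false} under that relaxation. For example, in the (odd, odd) case the pair is $(4\phi(h+g)-2\phi(g),\,2\phi(h)-2\phi(g))$; with $\phi(g)=1$, $\phi(h)=2$ the true value $\phi(h+g)=\phi((0,1,1))=3$ gives $(3,2)$, but the other even-weight values $\phi(h+g)\in\{5,6\}$ would give $(4,2)$ and $(1,2)$, which are not in the class. Nor does weight-parity ``control the carry structure enough'': parity of $\operatorname{wt}$ does not determine which element of $\{3,5,6\}$ (or $\{0,1,2,4\}$) the sum lands on. The proof must use the full additive structure of $\mathbb Z_2^3$, i.e.\ the seven triples $\{g,h,g+h\}$, so your four parity cases have to be refined either to an exhaustive check of all $7\cdot 6=42$ ordered pairs (equivalently the $7$ triples with their orderings), or to a structural argument. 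The clean structural route is to observe that $\Psi_1$ and $\Psi_2$ are discrete logarithms: each $\Psi_i^{-1}(j+1)=M_i\,\Psi_i^{-1}(j)$ for a fixed invertible linear $M_i$ of order $7$ on $\mathbb Z_2^3$, so $(\mathbb Z_2^3)^*$ becomes $\mathbb F_8^*$ with $\Psi_i=\log_{\gamma_i}$; writing $g=\gamma^a$, $h=\gamma^b$, $h+g=\gamma^c$ gives $\gamma^{c-a}=1+\gamma^{b-a}$, so the argument pair $(c-a,b-a)$ ranges over exactly the six pairs $(A,B)$ with $\gamma^A=1+\gamma^B$, which one checks form the stated class.

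A secondary inaccuracy: your justification that $c^7_q(a,b)$ is invariant under simultaneous doubling ``because $\rho\mapsto\rho^2$ is an automorphism'' is not valid --- squaring is not additive on $\mathbb F_q$, and indeed $c^7_q(1,2)=c^7_q(2,4)$ fails in general. It happens that the two classes you need are closed under doubling, but that is a consequence of the actual identities in Lemma~\ref{lem:cycEq7} (equivalently of $c(a,b)=c(n-a,b-a)=c(b,a)$), not of a doubling symmetry; since you only ever need Lemma~\ref{lem:cycEq7} itself, this remark is harmless but should be dropped.
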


Now we give sufficient conditions that guarantee that $\operatorname{Cay}(G_{l,3,q},S(\pi))$ is edge-regular.

\begin{theorem}\label{thm:ERG3}
	Let $q \equiv 1 \pmod{14}$ be a prime power, let $c \equiv 1 \pmod 4$, and let $l = (3c+1)/4$.
	Suppose that $c = c^7_q(1,5)$ (resp. $c =c^7_q(1,3)$) and set $\Gamma = \operatorname{Cay}(G_{l,3,q},S(\pi))$ where $\pi = \Psi_1$ (resp. $\pi = \Psi_2$).
	Then $\Gamma$ is edge-regular with parameters $(8lq,8l-2+q,8l-2)$ having a regular clique of order $8l$.
\end{theorem}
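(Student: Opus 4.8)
The plan is to verify the three ingredients of the conclusion one at a time: that $\Gamma$ has the claimed order and valency, that it is regular with the claimed $\lambda$, and that it has an $8l$-regular clique. The first is immediate: $G_{l,3,q}$ has $8lq$ elements, and by Lemma~\ref{lem:valencies} (or simply counting $|S(\pi)| = |S_0| + \sum_{z}|S_{z,\pi}|$) the valency is $(8l-1) + (2^m-1) = 8l-2+q$ once we use that each $S_{z,\pi}$ contributes $2r$ elements where $q = 14r+1$, so $7\cdot 2r = q-1$; hence $k = 8l-1 + q - 1 = 8l-2+q$. The clique claim is exactly Lemma~\ref{lem:regClique}: the sets $\mathcal C_f$ form a spread of $1$-regular cliques, and each has order $|{\mathbb Z_l \oplus \mathbb Z_2^3}| = 8l$.

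The heart of the argument is edge-regularity, i.e.\ showing every edge lies in exactly $\lambda = 8l-2$ triangles. By vertex-transitivity of the Cayley graph it suffices to count common neighbours of the identity $v$ and each of its neighbours $s \in S(\pi)$. There are two orbits of such edges under the stabilizer structure. First, when $s \in S_0$: here Lemma~\ref{lem:valencies} gives the induced valency on $\Gamma(v)$ at $s$ as $2^m l - 2 = 8l-2$ directly, so these edges are fine. Second, when $s = (0,g,\rho^j) \in S_{g,\pi}$ for some $g \in (\mathbb Z_2^3)^*$: Lemma~\ref{lem:valencies} gives the induced valency at $s$ as
\[
	\sum_{\substack{h \in (\mathbb Z^3_2)^* \\ h \ne g}} c^7_q(\pi(h-g)-\pi(g),\ \pi(h)-\pi(g)).
\]
Now I would invoke Proposition~\ref{pro:sameCycNumbers7}: with $\pi = \Psi_1$ every summand equals $c^7_q(1,5) = c$ (resp.\ with $\pi = \Psi_2$ every summand equals $c^7_q(1,3) = c$), and there are $|(\mathbb Z_2^3)^*| - 1 = 6$ terms, so the sum is $6c$. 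Edge-regularity therefore requires $6c = 8l - 2$, i.e.\ $l = (6c+2)/8 = (3c+1)/4$, which is exactly the hypothesis (and is an integer precisely because $c \equiv 1 \pmod 4$). So with the stated choice of $l$ both orbits of edges see the same number $8l-2$ of common neighbours, and $\Gamma$ is edge-regular with $\lambda = 8l-2$.

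The main obstacle is really all packaged into Proposition~\ref{pro:sameCycNumbers7}, which I am allowed to assume; granting that, the theorem is a bookkeeping exercise. If one had to prove that proposition too, the work would be in checking that the somewhat ad hoc definitions of $\Psi_1, \Psi_2$ (built from the shift maps $\sigma_\pm$, the weight parity, and $\phi$) force the pair $(\Psi_i(h-g)-\Psi_i(g),\ \Psi_i(h)-\Psi_i(g))$ to land, for every ordered pair of distinct nonzero $g,h$, in one of the classes $\{(1,5),(6,4),(3,2)\}$ (for $\Psi_1$) or $\{(1,3),(6,2),(5,4)\}$ (for $\Psi_2$), which by Lemma~\ref{lem:cycEq7}(i)--(iii) all carry equal cyclotomic numbers — a finite check over the $7\cdot 6 = 42$ ordered pairs, organised by the parities of $\operatorname{wt}(g)$, $\operatorname{wt}(h)$, $\operatorname{wt}(h-g)$. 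One should also double-check that the hypotheses are consistent, i.e.\ that there exist prime powers $q \equiv 1 \pmod{14}$ with $c^7_q(1,5) \equiv 1 \pmod 4$ (resp.\ $c^7_q(1,3)\equiv 1\pmod 4$), so that $l$ is a positive integer; this is where the later sections supply explicit $q$.
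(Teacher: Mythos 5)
Your proposal is correct and follows essentially the same route as the paper: reduce to the neighbourhood of the identity via vertex-transitivity, use Lemma~\ref{lem:valencies} to split the edges into the $S_0$ case (giving $8l-2$) and the $S_{g,\pi}$ case (giving a sum of six cyclotomic numbers), and apply Proposition~\ref{pro:sameCycNumbers7} to see each summand equals $c$, so that the hypothesis $l=(3c+1)/4$ forces $6c=8l-2$; the clique claim is Lemma~\ref{lem:regClique}. Your write-up is in fact slightly more explicit than the paper's (you spell out $N$, $k$, the count of summands, and the integrality of $l$), apart from a small typo in the intermediate expression for $|S(\pi)|$ whose final value $8l-2+q$ is nonetheless right.
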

\begin{proof}
	Let $v$ be a vertex of $\Gamma$.
	By Lemma~\ref{lem:valencies}, the subgraph $\Delta$ induced on $\Gamma(v)$ has valencies $8l-2$ and \[
		\sum_{\substack{h \in (\mathbb Z^3_2)^* \\ h \ne g}}c^7_q(\pi(h-g)-\pi(g),\pi(h)-\pi(g)), \text{ for each } g \in (\mathbb Z^3_2)^*.
	\]
	Suppose $c = c^7_q(1,5)$ (resp. $c = c^7_q(1,3)$).
	Then, by Proposition~\ref{pro:sameCycNumbers7}, the graph $\Delta$ is regular with valency $8l-2 = 6c^7_q(1,5)$ (resp. $8l-2 = 6c^7_q(1,3)$).
	By Lemma~\ref{lem:regClique}, the graph $\Gamma$ has a regular clique, as required.
\end{proof}

\subsection{Edge-regularity for the graphs $\operatorname{Cay}(G_{l,2,q},S(\pi))$} 
\label{sub:_m_2}

Here we focus on $m=2$.
We will need the following equality for the $3$rd cyclotomic numbers.

\begin{lemma}[See \cite{MacWilliams72}]\label{lem:cycEq3}
	Let $q \equiv 1 \pmod 6$ be a prime power.
	Then $c^3_q(1,2) = c^3_q(2,1) = c^3_q(0,0)+1$.
\end{lemma}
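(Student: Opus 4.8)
The plan is to deduce the stated identity from two standard symmetries of cyclotomic numbers together with a short counting argument; this is the classical determination of the order-$3$ cyclotomic numbers, so I will outline the steps rather than carry out every computation. Write $q = 6r+1$ and fix the primitive element $\rho$ used to define the cyclotomic classes. The key preliminary observation is that $1 = \rho^0 \in C^3_q(0)$ and, since $q \equiv 1 \pmod 6$, also $-1 = \rho^{(q-1)/2} = \rho^{3r} \in C^3_q(0)$ (as $3 \mid 3r$); consequently neither $1$ nor $-1$ lies in $C^3_q(1)$ or $C^3_q(2)$.

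Next I would record the two symmetries. First, the map $x \mapsto -1-x$ is an involution of $\mathbb F_q \setminus \{0,-1\}$ which, because $-1 \in C^3_q(0)$, sends $\{x \in C^3_q(a) : x+1 \in C^3_q(b)\}$ bijectively onto $\{x' \in C^3_q(b) : x'+1 \in C^3_q(a)\}$; hence $c^3_q(a,b) = c^3_q(b,a)$ for all $a,b$, and in particular $c^3_q(1,2) = c^3_q(2,1)$. Second, the map $x \mapsto x^{-1}$ is a bijection of $\mathbb F_q^*$ that carries $C^3_q(a)$ onto $C^3_q(-a)$ and, starting from $x+1 = y$, carries this pair to $x^{-1} + 1 = yx^{-1} \in C^3_q(b-a)$ (indices read modulo $3$); hence $c^3_q(a,b) = c^3_q(-a,b-a)$, and in particular $c^3_q(1,1) = c^3_q(2,0)$, which combined with the first symmetry gives $c^3_q(1,1) = c^3_q(0,2)$.

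Finally I would use a row-sum count. For each $a$, the sum $\sum_{b=0}^{2} c^3_q(a,b)$ equals the number of $x \in C^3_q(a)$ with $x+1 \neq 0$; since $-1 \in C^3_q(0)$ this is $|C^3_q(0)|-1 = 2r-1$ when $a=0$ and $|C^3_q(a)| = 2r$ when $a \in \{1,2\}$. Subtracting the $a=0$ identity from the $a=1$ identity gives
\[
 c^3_q(1,0) + c^3_q(1,1) + c^3_q(1,2) - c^3_q(0,0) - c^3_q(0,1) - c^3_q(0,2) = 1 .
\]
Now $c^3_q(1,0) = c^3_q(0,1)$ and $c^3_q(1,1) = c^3_q(0,2)$ by the symmetries above, so the left-hand side collapses to $c^3_q(1,2) - c^3_q(0,0)$, and we conclude $c^3_q(1,2) = c^3_q(0,0) + 1$. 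Together with $c^3_q(2,1) = c^3_q(1,2)$ this is exactly the assertion.

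There is no genuinely difficult step; the only points requiring care are the index arithmetic modulo $3$ in the inversion identity and tracking which cyclotomic class contains $1$ and $-1$ — this is precisely where the hypothesis $q \equiv 1 \pmod 6$ enters, ensuring $(q-1)/2$ is divisible by $3$. Alternatively, the identity can simply be read off from the cyclotomic-number tables in \cite{MacWilliams72}.
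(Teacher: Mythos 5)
Your argument is correct. The paper itself gives no proof of this lemma --- it is quoted directly from the cyclotomy literature (\cite{MacWilliams72}) --- so your self-contained derivation is a genuine addition rather than a rival to anything in the text. The three ingredients all check out: since $q-1=6r$, we have $-1=\rho^{3r}\in C^3_q(0)$, which validates the reflection $x\mapsto -1-x$ giving $c^3_q(a,b)=c^3_q(b,a)$; the inversion $x\mapsto x^{-1}$ giving $c^3_q(a,b)=c^3_q(-a,b-a)$ is standard and your index arithmetic is right (in particular $c^3_q(1,1)=c^3_q(2,0)=c^3_q(0,2)$); and the row sums $\sum_b c^3_q(a,b)$ equal $2r-1$ for $a=0$ and $2r$ for $a=1,2$ precisely because $-1$ sits in $C^3_q(0)$. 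Subtracting the two row-sum identities and cancelling via the symmetries does collapse to $c^3_q(1,2)-c^3_q(0,0)=1$, as you claim. This is essentially the classical derivation one finds in the reference; the only thing to be aware of is that for $q\equiv 1\pmod 3$ an odd prime power, the congruence $q\equiv 1\pmod 6$ is automatic, so the hypothesis is really just ensuring the cyclotomic classes of order $3$ are defined and that $(q-1)/2$ is a multiple of $3$ --- exactly the point you flag.
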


As a contrast to the previous subsection, in the next result we show that for $m=2$, the choice of bijection $\pi$ in the graph $\operatorname{Cay}(G_{l,2,q},S(\pi))$ is not important.

\begin{proposition}\label{pro:goodBijl2}
	Let $\pi : (\mathbb Z^2_2)^* \to  \mathbb Z_3$ be a bijection and let $g,h \in (\mathbb Z^2_2)^*$ with $g \ne h$.
	Suppose $q \equiv 1 \pmod{6}$ is a prime power. 
	Then we have the equality $c^3_q(\pi(h-g)-\pi(g),\pi(h)-\pi(g)) = c_q^3(1,2)$.
\end{proposition}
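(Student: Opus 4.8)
The plan is to exploit the fact that $(\mathbb Z_2^2)^*$ has only three elements, so there are exactly $3! = 6$ bijections $\pi:(\mathbb Z_2^2)^*\to\mathbb Z_3$, and to show that for each of them, and for each of the (few) choices of the pair $g\neq h$, the cyclotomic number $c^3_q(\pi(h-g)-\pi(g),\pi(h)-\pi(g))$ equals $c^3_q(1,2)$. First I would record the group structure of $(\mathbb Z_2^2)^*=\{a,b,c\}$: each nonzero element has order $2$, and the sum of any two distinct nonzero elements is the third, i.e. $a+b=c$, $a+c=b$, $b+c=a$. In particular, for $g\neq h$ both nonzero, the element $h-g=h+g$ is again a nonzero element, and it is the \emph{third} element, distinct from both $g$ and $h$. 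So as the ordered pair $(g,h)$ ranges over the $6$ ordered pairs of distinct nonzero elements, the triple $(h-g,\,g,\,h)$ is always a permutation of $\{a,b,c\}$.

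Next I would reduce the index computation. Writing $\pi(a),\pi(b),\pi(c)$ for the three values of $\pi$ — which, since $\pi$ is a bijection onto $\mathbb Z_3$, form the set $\{0,1,2\}$ in some order — the two arguments of the cyclotomic number are $\pi(h-g)-\pi(g)$ and $\pi(h)-\pi(g)$, both taken mod $3$. Since $\{h-g,g,h\}=\{a,b,c\}$, these two differences are $\pi(x)-\pi(g)$ and $\pi(y)-\pi(g)$ where $\{x,y\}=\{a,b,c\}\setminus\{g\}$; as $g$ runs and $\pi$ varies, the unordered pair $\{\pi(x)-\pi(g),\pi(y)-\pi(g)\}$ is always $\{1-\pi(g)+\pi(g),2-\ldots\}$ — more cleanly: if $\pi(g)=t$ then $\{\pi(x),\pi(y)\}=\{0,1,2\}\setminus\{t\}$, so $\{\pi(x)-t,\pi(y)-t\}=\{(0-t),(1-t),(2-t)\}\setminus\{0\}$, and one checks this is $\{1,2\}$ for every $t\in\{0,1,2\}$. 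Hence the ordered pair of arguments is always either $(1,2)$ or $(2,1)$.

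Finally I would invoke Lemma \ref{lem:cycEq3}, which gives $c^3_q(1,2)=c^3_q(2,1)$ for any prime power $q\equiv 1\pmod 6$. Combining with the previous paragraph, $c^3_q(\pi(h-g)-\pi(g),\pi(h)-\pi(g))$ is either $c^3_q(1,2)$ or $c^3_q(2,1)$, and these are equal, so the claimed equality holds. I do not anticipate a genuine obstacle here: the only mild subtlety is the bookkeeping in the second paragraph showing that the subtraction by $\pi(g)$ always sends the remaining two $\pi$-values to $\{1,2\}$ regardless of which bijection and which base point are chosen; this is a three-case check (one case per value of $t=\pi(g)$) and is entirely routine. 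One could alternatively present the argument via the symmetry relation $c^3_q(a,b)=c^3_q(b,a)$ together with the translation/scaling identities for cyclotomic numbers, but the direct enumeration above is shortest given how small $(\mathbb Z_2^2)^*$ is.
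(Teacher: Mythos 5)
Your proposal is correct and follows essentially the same route as the paper: observe that $h-g$, $g$, $h$ are the three distinct elements of $(\mathbb Z_2^2)^*$, so $\pi$ sends them to all of $\mathbb Z_3$, forcing the argument pair to be $(1,2)$ or $(2,1)$, and then apply Lemma~\ref{lem:cycEq3} to get $c^3_q(1,2)=c^3_q(2,1)$. The paper's proof is just a more compressed version of your enumeration.
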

\begin{proof}
	Observe that $\pi(h-g)$, $\pi(g)$, and $\pi(h)$ are pairwise distinct.
	Hence $(\pi(h-g)-\pi(g),\pi(h)-\pi(g))$ is either $(1,2)$ or $(2,1)$ and, using Lemma~\ref{lem:cycEq3}, we see that $c^3_q(\pi(h-g)-\pi(g),\pi(h)-\pi(g)) = c_q^3(1,2)$.
\end{proof}

Now we give sufficient conditions that guarantee that $\operatorname{Cay}(G_{l,2,q},S(\pi))$ is edge-regular.

\begin{theorem}\label{thm:ERG2}
	Let $q \equiv 1 \pmod{6}$ be a prime power and let $\pi : (\mathbb Z^2_2)^* \to  \mathbb Z_3$ be a bijection.
	Suppose $c = c_q^3(1,2)$ is odd and $l = (c+1)/2$.
	Then $\operatorname{Cay}(G_{l,2,q},S(\pi))$ is edge-regular with parameters $(4lq,4l-2+q,4l-2)$ having a regular clique of order $4l$.
\end{theorem}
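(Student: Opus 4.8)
The plan is to follow the proof of Theorem~\ref{thm:ERG3} almost verbatim, with Proposition~\ref{pro:goodBijl2} playing the role of Proposition~\ref{pro:sameCycNumbers7}. Write $\Gamma = \operatorname{Cay}(G_{l,2,q},S(\pi))$ and note first that $2^2-1 = 3$, so the hypothesis $q \equiv 1 \pmod 6$ is exactly what Lemma~\ref{lem:undirected} requires: $\Gamma$ is an undirected graph, and being a Cayley graph it is $|S(\pi)|$-regular. I would begin by pinning down the two global parameters. The number of vertices is $N = |G_{l,2,q}| = l\cdot 2^2\cdot q = 4lq$. For the valency, $S(\pi)$ is the disjoint union of $S_0$ and the sets $S_{z,\pi}$ for $z \in (\mathbb{Z}_2^2)^*$; here $|S_0| = |(\mathbb{Z}_l\oplus\mathbb{Z}_2^2)^*| = 4l-1$, and each $S_{z,\pi}$ has $(q-1)/3$ elements, being in bijection with a coset of the cubes in $\mathbb{F}_q^*$. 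As there are three choices of $z$, this gives $k = |S(\pi)| = (4l-1) + 3\cdot(q-1)/3 = 4l-2+q$, matching the claimed parameters.

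Next I would analyse the local structure at a vertex $v$. By Lemma~\ref{lem:valencies} the subgraph $\Delta$ induced on $\Gamma(v)$ has valency $2^2 l - 2 = 4l-2$ on the vertices coming from $S_0$, and, for each $g \in (\mathbb{Z}_2^2)^*$, valency $\sum_{h \in (\mathbb{Z}_2^2)^*,\ h\neq g} c_q^3(\pi(h-g)-\pi(g),\pi(h)-\pi(g))$ on the vertices coming from $S_{g,\pi}$. Since $|(\mathbb{Z}_2^2)^*| = 3$, for each fixed $g$ this sum has exactly two terms, and Proposition~\ref{pro:goodBijl2} says each term equals $c_q^3(1,2) = c$; hence every such sum equals $2c$. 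Because $c$ is odd, $l = (c+1)/2$ is a positive integer and $4l-2 = 2(c+1)-2 = 2c$, so all the local valencies of $\Delta$ coincide and $\Delta$ is regular of degree $4l-2$.

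Finally I would deduce edge-regularity and produce the clique. For any edge $vw$, the number of common neighbours of $v$ and $w$ equals the degree of $w$ inside the subgraph induced on $\Gamma(v)$, which by the previous step is $4l-2$; by vertex-transitivity the same holds for every edge, so $\Gamma$ is edge-regular with parameters $(4lq,\,4l-2+q,\,4l-2)$. By Lemma~\ref{lem:regClique} the family $\{\mathcal{C}_f : f \in \mathbb{F}_q\}$ is a spread of $1$-regular cliques of $\Gamma$, and each $\mathcal{C}_f$ has order $|\mathbb{Z}_l\oplus\mathbb{Z}_2^2| = 4l$, which supplies the required regular clique.

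I do not anticipate a genuine obstacle here: all of the real content is already contained in Proposition~\ref{pro:goodBijl2} and Lemmas~\ref{lem:valencies}, \ref{lem:regClique}, and \ref{lem:undirected}. The only points requiring a little care are the bookkeeping that makes $\Delta$ regular — namely that there are exactly two cyclotomic summands and that $4l-2 = 2c$ — and remembering that $q\equiv 1\pmod 6$ is used both to invoke Lemma~\ref{lem:undirected} (with $n=3$) and to ensure that $(q-1)/3$ is an integer in the valency count.
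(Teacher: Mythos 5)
Your proposal is correct and follows essentially the same route as the paper's proof: reduce to the regularity of the local subgraph $\Delta$ on $\Gamma(v)$ via Lemma~\ref{lem:valencies}, apply Proposition~\ref{pro:goodBijl2} to see that each cyclotomic sum equals $2c = 4l-2$, and invoke Lemma~\ref{lem:regClique} for the clique. The only difference is that you explicitly verify the parameter counts $N = 4lq$ and $k = 4l-2+q$, which the paper leaves implicit.
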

\begin{proof}
	Let $v$ be a vertex of $\Gamma$.
	By Lemma~\ref{lem:valencies}, the subgraph $\Delta$ induced on $\Gamma(v)$ has valencies $4k-2$ and \[
		\sum_{\substack{h \in (\mathbb Z^2_2)^* \\ h \ne g}}c^3_q(\pi(h-g)-\pi(g),\pi(h)-\pi(g)), \text{ for each } g \in (\mathbb Z^3_2)^*.
	\]
	By Proposition~\ref{pro:goodBijl2}, the graph $\Delta$ is regular with valency $4l-2 = 2c^3_q(1,2)$.
	By Lemma~\ref{lem:regClique}, the graph $\Gamma$ has a regular clique, as required.
\end{proof}

By Theorem~\ref{thm:ERG2}, for a prime power $q \equiv 1 \pmod{6}$, all we need is that $c_q^3(1,2)$ is odd to ensure that we can construct an edge-regular graph having a regular clique.
Next we record a result to help us control the parity of $c_q^3(1,2)$.

\begin{lemma}[{\cite[Lemma 4]{Storer67}}]\label{lem:2inC}
	Let $q \equiv 1 \pmod 6$ be a prime power.
	Then $c^3_q(1,2)$ is even if and only if $2 \in C^3_q(0)$.
\end{lemma}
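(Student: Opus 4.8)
The plan is to exploit the well-known recurrence relations among the $3$rd cyclotomic numbers and the total count $\sum_{a,b} c_q^3(a,b) = q-2$, together with the symmetry relation from Lemma~\ref{lem:cycEq3}. Write $q = 6r+1$, fix a primitive element $\rho$, and recall that for $i,j \in \{0,1,2\}$ the cyclotomic number $c_q^3(i,j) = |(C_q^3(i)+1) \cap C_q^3(j)|$. First I would record the standard linear identities: for each fixed $i$, summing over $j$ gives $\sum_{j=0}^{2} c_q^3(i,j) = 2r - \delta$, where the correction $\delta \in \{0,1\}$ records whether $-1 \in C_q^3(i)$ (equivalently whether $0 \in C_q^3(i)+1$); since $q \equiv 1 \pmod 3$, we have $-1 = \rho^{(q-1)/2} = \rho^{3r}$, so $-1 \in C_q^3(0)$ and the correction falls in the row $i=0$. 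This already pins down $c_q^3(0,0) + c_q^3(0,1) + c_q^3(0,2) = 2r-1$, while the other two rows sum to $2r$.

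Next I would bring in the classical reduction formulas for cyclotomic numbers of order $3$ (see \cite{Storer67} or \cite{MacWilliams72}): using the action of raising to a power coprime to $3$ and the symmetry $c_q^3(a,b) = c_q^3(-a, b-a)$ one obtains the collapse of the nine numbers into essentially three distinct values. Concretely, it is classical that $c_q^3(0,0) =: A$, $c_q^3(0,1) = c_q^3(1,0) = c_q^3(2,2) =: B$, $c_q^3(0,2) = c_q^3(2,0) = c_q^3(1,1) =: C$, and $c_q^3(1,2) = c_q^3(2,1) =: D$. Feeding these into the row-sum identities gives $A + B + C = 2r-1$ and $C + A + D = 2r$ and $B + D + C = 2r$ (the last two from rows $i=1,2$ after the substitutions), whence $D = C+1$ and $B = C$; combined with Lemma~\ref{lem:cycEq3}, which already asserts $D = A+1$, we get $A = C$ as well, so in fact $A = B = C =: t$ and $D = t+1$ with $3t = 2r-1$. (When $3 \nmid 2r-1$ the classical theory instead produces the two-parameter solution governed by the representation $4q = L^2 + 27M^2$; I would cite that representation and the explicit formulas $9A = 2q - 4 + L$, etc., as in \cite{Storer67}.)

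From here the parity question becomes transparent: $c_q^3(1,2) = D$ is even precisely when $A = D-1$ is odd, i.e. when $9A = 2q-4+L$ is odd, i.e. when $L$ is odd (as $2q-4$ is even), and the sign/parity of $L$ in the representation $4q = L^2+27M^2$, $L \equiv 1 \pmod 3$, is exactly the condition that detects whether $2$ is a cube modulo $q$, i.e. whether $2 \in C_q^3(0)$ — this is precisely \cite[Lemma~4]{Storer67} and is the content we are asked to cite. I expect the main obstacle to be organizing the classical case distinction cleanly rather than any genuine difficulty: one must be careful that the ``degenerate'' family ($3 \mid 2r-1$, forcing all three of $A,B,C$ equal) and the generic family are handled uniformly, and one must correctly track which residue class of $L$ modulo small integers encodes cubic residuacity of $2$. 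Since this lemma is quoted verbatim from Storer, the honest write-up is simply to invoke \cite[Lemma~4]{Storer67}; the sketch above indicates the route Storer takes. Thus:

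\begin{proof}
This is \cite[Lemma~4]{Storer67}; we briefly indicate the argument. Write $q = 6r+1$ and fix a primitive element $\rho$. The nine cyclotomic numbers $c_q^3(a,b)$ reduce, via the symmetries $c_q^3(a,b) = c_q^3(b,a)$ (Lemma~\ref{lem:cycEq3}) and $c_q^3(a,b) = c_q^3(-a,b-a)$, to the values $A := c_q^3(0,0)$, $B := c_q^3(0,1)$, $C := c_q^3(0,2)$, and $D := c_q^3(1,2)$, and the row-sum relations
\[
	A+B+C = 2r-1, \qquad A+C+D = B+C+D = 2r
\]
together with Lemma~\ref{lem:cycEq3} ($D = A+1$) force $B = C$ and, when $3 \mid 2r-1$, also $A = C$. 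In the generic case the system is solved by the representation $4q = L^2 + 27M^2$ with $L \equiv 1 \pmod 3$, yielding $9A = 2q-4+L$ and $D = A+1$. Hence $D$ is even if and only if $A$ is odd, if and only if $L$ is odd. By the classical criterion for cubic residuacity of $2$ (equivalently, by tracking whether $0 \in C_q^3(0)+1$ against the splitting of $C_q^3(0)$), this last condition holds if and only if $2 \in C_q^3(0)$. The degenerate case $3 \mid 2r-1$ is handled identically using $3t = 2r-1$ and $D = t+1$. See \cite{Storer67} for the full details.
\end{proof}
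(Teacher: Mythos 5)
The paper offers no proof of this statement at all---it is quoted verbatim from Storer---so the only part of your write-up that can be compared with the paper is the citation, which matches. The sketch you wrap around the citation, however, contains several concrete errors. With $A = c^3_q(0,0)$, $B = c^3_q(0,1) = c^3_q(1,0) = c^3_q(2,2)$, $C = c^3_q(0,2) = c^3_q(2,0) = c^3_q(1,1)$ and $D = c^3_q(1,2) = c^3_q(2,1)$, the row sums for $i=1$ and $i=2$ both reduce to $B+C+D = 2r$, not to $A+C+D = 2r$; the linear relations therefore yield only $D = A+1$ and carry no information separating $B$ from $C$. Your conclusions $B=C$ and the ``degenerate case'' $A=B=C$ with $3t = 2r-1$ are artifacts of that slip: in the classical theory the nine numbers are always governed by $4q = L^2+27M^2$, with $B-C$ controlled by $M$, and no such case split exists. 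Next, the explicit formula is $9A = q-8+L$ (equivalently $9D = q+1+L$), not $2q-4+L$; since $q$ is odd this gives $D \equiv L \pmod 2$, so $c^3_q(1,2)$ is even if and only if $L$ is \emph{even}---the opposite of what you assert. Finally, Gauss's criterion states that $2$ is a cubic residue modulo $q$ exactly when $L$ and $M$ are both even (equivalently $q = x^2+27y^2$), again the opposite parity from your claim. Your two inversions cancel to land on the correct conclusion, but neither intermediate statement is true.

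More seriously, the one genuinely nontrivial step---relating the parity of $L$ to whether $2 \in C^3_q(0)$---is not argued at all; you justify it by pointing back at the very lemma being proved, so even after repairing the parities the sketch is circular. If you want to say more than ``see Storer,'' the clean route avoids $L$ and $M$ entirely: $c^3_q(0,0)$ counts the set $T = \{x \in C^3_q(0) : x+1 \in C^3_q(0)\}$, and $x \mapsto x^{-1}$ is an involution of $T$ (because $1+x^{-1} = (1+x)x^{-1}$ and $C^3_q(0)$ is a multiplicative group). Its only possible fixed points are $x = \pm 1$; since $1+(-1)=0$, the point $-1$ never lies in $T$, while $1 \in T$ precisely when $2 \in C^3_q(0)$. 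Hence $c^3_q(0,0)$ is odd if and only if $2 \in C^3_q(0)$, and Lemma~\ref{lem:cycEq3} ($c^3_q(1,2) = c^3_q(0,0)+1$) converts this into the stated claim. Either give that argument or simply cite Storer and delete the sketch.
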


Now we present a simple condition that guarantees that $c^3_q(1,2)$ is odd.

\begin{corollary}\label{cor:l2oddCyc}
	Let $p$ be an odd prime, let $n$ be the order of $2$ modulo $p$, and let $e$ be the order of $p$ modulo $3n$.
	Suppose that $e > 1$.
	Set $q = p^a$ where $a \not \equiv 0 \pmod e$.
	Further suppose that $q \equiv 1 \pmod 6$.
	Then $c^3_q(1,2)$ is odd.
\end{corollary}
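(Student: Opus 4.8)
The plan is to reduce, via Storer's criterion (Lemma~\ref{lem:2inC}), the parity of $c^3_q(1,2)$ entirely to the question of whether $2$ is a cube in $\mathbb{F}_q^{*}$. Indeed, $c^3_q(1,2)$ is odd precisely when $2 \notin C^3_q(0)$, and $C^3_q(0)$ is exactly the set of nonzero cubes of $\mathbb{F}_q$. So it suffices to prove that, under the stated hypotheses, $2$ is not a cube in $\mathbb{F}_q^{*} = \mathbb{F}_{p^a}^{*}$.

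First I would make two small observations that let us speak of $e = \operatorname{ord}_{3n}(p)$ at all: since $q = p^a \equiv 1 \pmod 3$ we cannot have $p = 3$, and since $n = \operatorname{ord}_p(2)$ divides $p-1$ we have $\gcd(n,p) = 1$; together with $p$ odd this gives $\gcd(p,3n) = 1$, so $e$ is well defined (and the hypothesis $e > 1$ simply guarantees that the condition $a \not\equiv 0 \pmod e$ can be met, e.g.\ by $a = 1$). The crux is then the following elementary fact: the element $2$, regarded inside $\mathbb{F}_q^{*}$, still has multiplicative order exactly $n$, because $2$ lies in the prime subfield $\mathbb{F}_p$ and the order of a group element does not change in an overgroup. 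Since $\mathbb{F}_q^{*}$ is cyclic of order $q-1$ with $3 \mid q-1$, an element $x$ is a cube if and only if $x^{(q-1)/3} = 1$; applying this to $x = 2$ shows that $2$ is a cube in $\mathbb{F}_q^{*}$ if and only if $n \mid (q-1)/3$, i.e.\ if and only if $3n \mid q-1$.

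It remains to translate this divisibility condition: $3n \mid q - 1 = p^a - 1$ holds if and only if $e \mid a$. Since the hypothesis is $a \not\equiv 0 \pmod e$, we conclude $3n \nmid q-1$, hence $2$ is not a cube in $\mathbb{F}_q^{*}$, i.e.\ $2 \notin C^3_q(0)$; Lemma~\ref{lem:2inC} then gives that $c^3_q(1,2)$ is odd. I do not expect a genuine obstacle here: the argument is a short chain of equivalences resting on Lemma~\ref{lem:2inC}. The only points that deserve careful (if brief) justification are that $2$ retains multiplicative order $n$ in every extension $\mathbb{F}_{p^a}$ of $\mathbb{F}_p$, and the standard characterization of cubes in $\mathbb{F}_q^{*}$ via the power $(q-1)/3$; everything else is bookkeeping with orders and divisibilities.
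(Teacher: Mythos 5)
Your proof is correct and follows essentially the same route as the paper: both reduce via Lemma~\ref{lem:2inC} to showing $2 \notin C^3_q(0)$, and both argue that if $2$ were a cube in $\mathbb F_q^*$ then, since $2$ retains order $n$ from the prime subfield, $3n$ would divide $q-1$ and hence $e \mid a$, contradicting $a \not\equiv 0 \pmod e$. If anything, your use of the criterion $2^{(q-1)/3}=1$ is slightly cleaner than the paper's manipulation with $2=\rho^{3r}$.
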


\begin{proof}
	By Lemma~\ref{lem:2inC}, it suffices to show that $2 \in C_q^3(0)$ implies that $a \equiv 0 \pmod e$.
	Suppose that $2 = \rho^{3r}$ for some $r$.
	Then $2^n = 1 = \rho^{3nr}$ and hence $3nr = q - 1$.
	Thus $q = p^a \equiv 1 \pmod {3n}$.
	We then see that $a \equiv 0 \pmod e$, as required.
\end{proof}

We will use the next (obvious) result to find primes $p$ for which the $e$ in Corollary~\ref{cor:l2oddCyc} is strictly greater than $1$.

\begin{proposition}\label{pro:mnot1}
	Let $p$ be an odd prime.
	Suppose that $p \equiv 1 \pmod{3n}$ where $n$ is the order of $2$ modulo $p$.
	Then $2^{(p-1)/3} \equiv 1 \pmod p$.
\end{proposition}

Let $p$ be a prime such that $2^{(p-1)/3} \not \equiv 1 \pmod p$.
By Proposition~\ref{pro:mnot1}, we have $p \not \equiv 1 \pmod{3n}$ where $n$ is the order of $2$ modulo $p$.
Such primes can be used to construct edge-regular graphs (via Theorem~\ref{thm:ERG2}) since the corresponding $e$ in Corollary~\ref{cor:l2oddCyc} will be greater than $1$.

\begin{remark}
	\label{rem:infinitelymanyp}
	The condition that $2^{(p-1)/3} \not \equiv 1 \pmod p$ is the same as saying that $2$ is a cubic non-residue modulo $p$.
	We remark that there exist infinitely many primes $p \equiv 1 \pmod 3$ such that $2$ is a cubic non-residue modulo $p$ (see Cox~\cite[Theorem 9.8 and Theorem 9.12]{Cox}).
\end{remark}


Let $q = p^a$ be a prime power such that $q \equiv 1 \pmod{6}$.
Then either $p \equiv 1 \pmod 3$ or $p \equiv 5 \pmod 6$ and $a$ is even.
Our next result shows that the hypotheses of Corollary~\ref{cor:l2oddCyc} can only be satisfied if $p \equiv 1 \pmod 3$.

\begin{proposition}\label{pro:p5mod6}
	Let $p \equiv 5 \pmod 6$ be a prime, let $n$ be the order of $2$ modulo $p$, and let $e$ be the order of $p$ modulo $3n$.
	Then $e = 2$.
\end{proposition}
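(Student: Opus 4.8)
The plan is to show two things: that $p^2 \equiv 1 \pmod{3n}$, so that $e$ divides $2$; and that $p \not\equiv 1 \pmod{3n}$, so that $e \neq 1$. Together these force $e = 2$.

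First I would record the elementary consequences of $p \equiv 5 \pmod 6$: namely $p$ is odd and $p \equiv 2 \pmod 3$. The latter immediately gives $p \not\equiv 1 \pmod 3$; since $3 \mid 3n$, this yields $p \not\equiv 1 \pmod{3n}$, so $e > 1$. The same congruence gives $p + 1 \equiv 0 \pmod 3$, hence $3 \mid p^2 - 1$.

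The key structural observation is that $3 \nmid n$. Indeed, by Fermat's little theorem $n = \operatorname{ord}_p(2)$ divides $p - 1$; if $3$ divided $n$ it would divide $p - 1$, contradicting $p \equiv 2 \pmod 3$. Now $n \mid p - 1 \mid p^2 - 1$, and $3 \mid p^2 - 1$, and $\gcd(3,n) = 1$, so $3n \mid p^2 - 1$, i.e. $p^2 \equiv 1 \pmod{3n}$. Hence $e \mid 2$, and combined with $e > 1$ we conclude $e = 2$.

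I do not anticipate a genuine obstacle here — the argument is short divisibility bookkeeping — so the only thing to be careful about is the coprimality step: one must verify $\gcd(3,n)=1$ before combining the two divisibility facts into $3n \mid p^2-1$, since otherwise $3 \mid p^2-1$ and $n \mid p^2-1$ would not by themselves give $3n \mid p^2-1$. That is exactly where the hypothesis $p \equiv 5 \pmod 6$ (rather than merely $p \equiv 1 \pmod 6$) is used.
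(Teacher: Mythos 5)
Your proof is correct, and its overall skeleton matches the paper's: show $p^2 \equiv 1 \pmod{3n}$ so that $e \mid 2$, then rule out $e=1$ via $p \not\equiv 1 \pmod 3$. The only real difference is how the key divisibility $3n \mid p^2-1$ is obtained. The paper writes $2^{(p^2-1)/3} = \bigl(2^{(p+1)/3}\bigr)^{p-1} \equiv 1 \pmod p$ (using $3 \mid p+1$ and Fermat), concluding directly that $n \mid (p^2-1)/3$ and hence $3n \mid p^2-1$ without any coprimality argument. You instead observe $3 \nmid n$ (since $n \mid p-1$ and $3 \nmid p-1$), then combine $n \mid p-1$ and $3 \mid p+1$ via $\gcd(3,n)=1$. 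Both routes are valid and equally elementary; yours makes the role of the hypothesis $p\equiv 5 \pmod 6$ a little more transparent (it is what forces $3 \nmid n$), while the paper's sidesteps the coprimality check entirely by folding the factor of $3$ into the exponent. You are also right that the coprimality step is the one place your version could go wrong if skipped, and you handle it correctly.
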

\begin{proof}
	Let $n$ be the order of $2$ modulo $p$.
	Since $2^{(p^2-1)/3} = \left ( 2^{(p+1)/3} \right )^{p-1} \pmod {p}$, we see that $n$ divides $(p^2-1)/3$.
	Hence $p^2 \equiv 1 \pmod{3n}$ and so $e$ equals $1$ or $2$.	
	If $e = 1$ then $p \equiv 1 \pmod{3n}$, which is impossible since $p \equiv 5 \pmod 6$.
\end{proof}

%

\section{On being strongly regular} 
\label{sec:notSRGs}

Neumaier's question is about edge-regular graphs that are \emph{not} strongly regular.
In this section, we show that the graphs constructed in Section~\ref{sub:m_3} and Section~\ref{sub:_m_2} are not strongly regular (see Corollary~\ref{cor:notSRG3} and Corollary~\ref{cor:notSRG2}).

The \textbf{eigenvalues} of a graph are defined to be the eigenvalues of its adjacency matrix. 
The next proposition is a standard result about the eigenvalues of a strongly regular graph (see Brouwer and Haemers~\cite[Chapter 9]{brou:spec11} or Cameron and Van Lint~\cite[Chapter 2]{CvL}).

\begin{proposition}\label{pro:SRG}
	Let $\Gamma$ be a non-complete strongly regular graph with parameters $(N,k,\lambda,\mu)$ and eigenvalues $k > \theta_1 \geqslant \theta_2$.
	Then
	\begin{align*}
		(N-k-1)\mu &= k(k-\lambda-1); \\
		\lambda-\mu &= \theta_1+\theta_2; \\
		\mu - k &= \theta_1 \theta_2.
	\end{align*}
\end{proposition}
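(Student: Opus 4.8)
The plan is to recall the standard spectral theory of strongly regular graphs and extract the three identities by combining a counting argument with the fact that the adjacency matrix satisfies a quadratic polynomial. First I would set up notation: let $A$ be the adjacency matrix of $\Gamma$, let $J$ be the all-ones matrix, and let $I$ be the identity matrix. The defining combinatorial conditions of a strongly regular graph translate into the single matrix equation
\[
	A^2 = kI + \lambda A + \mu(J - I - A),
\]
since the $(u,w)$ entry of $A^2$ counts walks of length $2$, which equals $k$ when $u = w$, equals $\lambda$ when $u \sim w$, and equals $\mu$ when $u \ne w$ and $u \not\sim w$. Rearranging gives
\[
	A^2 - (\lambda - \mu)A - (k - \mu)I = \mu J.
\]

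Next I would use the regularity of $\Gamma$: since $\Gamma$ is $k$-regular, $AJ = JA = kJ$, and the all-ones vector $\mathbf 1$ is an eigenvector of $A$ with eigenvalue $k$. Any eigenvector $x$ of $A$ with eigenvalue $\theta \ne k$ is orthogonal to $\mathbf 1$, hence $Jx = 0$, so applying the displayed matrix identity to $x$ yields $\theta^2 - (\lambda - \mu)\theta - (k - \mu) = 0$. Therefore the two eigenvalues $\theta_1, \theta_2$ other than $k$ are exactly the roots of $t^2 - (\lambda-\mu)t - (k-\mu)$; one must check $\Gamma$ is connected and non-complete so that there genuinely are two such roots (both occurring as eigenvalues), which follows from $\Gamma$ being non-complete strongly regular with $\mu \ge 1$ on the relevant component — I would invoke the cited references for this standard fact rather than reprove it. By Vieta's formulas, $\theta_1 + \theta_2 = \lambda - \mu$ and $\theta_1\theta_2 = -(k-\mu) = \mu - k$, giving the second and third identities.

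For the first identity I would count edges between a vertex $v$ and the set of vertices at distance $2$ from $v$. Fix $v$; it has $k$ neighbours, and the remaining $N - k - 1$ vertices are all non-adjacent to $v$ (using non-completeness and that a strongly regular graph with $\mu > 0$ has diameter $2$), each contributing $\mu$ common neighbours with $v$. Counting the edges from $\Gamma(v)$ to the non-neighbours of $v$ in two ways: each of the $k$ neighbours of $v$ has $k - 1 - \lambda$ neighbours outside $\{v\} \cup \Gamma(v)$, and each of the $N-k-1$ non-neighbours has exactly $\mu$ neighbours in $\Gamma(v)$, so $k(k - \lambda - 1) = (N - k - 1)\mu$.

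The main obstacle is not any single calculation — each step is routine — but rather making sure the hypotheses are used correctly: specifically, that ``non-complete strongly regular'' guarantees the graph is connected with diameter $2$ (so that $\mu \ge 1$ and all non-neighbours of $v$ are at distance exactly $2$), which is what makes both the counting argument and the claim ``$\theta_1, \theta_2$ are genuinely the two remaining eigenvalues'' go through. Since this is standard material, I would either cite Brouwer--Haemers and Cameron--Van Lint for the connectivity/diameter fact or note briefly that if $\Gamma$ were disconnected it would be a disjoint union of complete graphs, contradicting non-completeness together with the strongly regular hypothesis (which forces $\mu$ to be well-defined and, for a disconnected such graph, $\mu = 0$, a degenerate case excluded here).
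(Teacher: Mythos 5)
Your proof is correct; the paper itself does not prove this proposition but simply cites Brouwer--Haemers and Cameron--Van Lint, and your argument (the matrix identity $A^2=kI+\lambda A+\mu(J-I-A)$, Vieta on the restricted eigenvalues, and the double count of edges between $\Gamma(v)$ and the non-neighbours of $v$) is exactly the standard proof found in those references. One small remark: a disjoint union of complete graphs is non-complete and strongly regular with $\mu=0$ under the paper's definition, so what really rules out disconnectedness is the hypothesis $k>\theta_1$ (i.e.\ $k$ is a simple eigenvalue) rather than non-completeness alone; this does not affect the validity of your argument.
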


Let $\Gamma$ be a graph and let $\pi = \{\pi_1,\dots,\pi_s\}$ be a partition of the vertices of $\Gamma$.
For each vertex $x$ in $\pi_i$, write $d_{x}^{(j)}$ for the number of neighbours of $x$ in $\pi_j$.
Then we write $b_{ij} = 1/|\pi_i|\sum_{x \in \pi_i} d_{x}^{(j)}$ for the average number of neighbours in $\pi_j$ of vertices in $\pi_i$.
The matrix $B_\pi := (b_{ij})$ is called the \textbf{quotient matrix} of $\pi$ and $\pi$ is called \textbf{equitable} if for all $i$ and $j$, we have $d_{x}^{(j)} = b_{ij}$ for each $x \in \pi_i$.
By \cite[Theorem 9.3.3]{God01}, if $\pi$ is an equitable partition of $\Gamma$ then the eigenvalues of the quotient matrix $B_\pi$ are also eigenvalues of $\Gamma$.

\begin{lemma}\label{lem:srgparam}
	Let $\Gamma$ be a non-complete strongly regular graph with parameters $(N,k,\lambda,\mu)$ having a $1$-regular clique of order $s+1$.
	Then there exists an integer $t$ such that $(N,k,\lambda,\mu) = ((s+1)(st+1),s(t+1),s-1,t+1)$.
\end{lemma}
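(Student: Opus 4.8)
The strategy is to exploit the $1$-regular clique together with the standard strongly regular graph identities in Proposition~\ref{pro:SRG}. Let $\mathcal C$ be a $1$-regular clique of order $s+1$, so $\mathcal C$ is an $(s+1)$-clique and every vertex outside $\mathcal C$ has exactly one neighbour in $\mathcal C$. First I would count edges between $\mathcal C$ and its complement: each of the $N-(s+1)$ outside vertices contributes exactly one such edge, while each vertex of $\mathcal C$ has $k-s$ neighbours outside $\mathcal C$ (it is adjacent to the other $s$ vertices of $\mathcal C$). Double counting gives $(s+1)(k-s) = N-(s+1)$, hence $N = (s+1)(k-s+1)$.

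Next I would pin down $\lambda$. Take two adjacent vertices $x,y$ inside $\mathcal C$; their common neighbours include the remaining $s-1$ vertices of $\mathcal C$, and any common neighbour outside $\mathcal C$ would be a second neighbour of $x$ (or $y$) in $\mathcal C$, contradicting $1$-regularity. Hence $\lambda = s-1$. Now write $\mu = t+1$ for an integer $t \geqslant 0$ (this just names $\mu$; non-completeness guarantees $\mu$ is defined). Plugging $\lambda = s-1$ into the first identity of Proposition~\ref{pro:SRG}, $(N-k-1)\mu = k(k-\lambda-1) = k(k-s)$, and substituting $N-(s+1) = (s+1)(k-s)$, i.e. $N - k - 1 = (s+1)(k-s) - k = s(k-s) + (k-s) - k = s(k-s) - s$... more cleanly, $N-k-1 = sk - s^2 - s + k - s - k$; I would simply carry the substitution $N = (s+1)(k-s+1)$ through directly. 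This yields a linear relation between $k$ and $s,t$; solving it should give $k = s(t+1)$, and back-substituting gives $N = (s+1)(st+1)$ and $\mu = t+1$, matching the claimed parameter tuple $((s+1)(st+1), s(t+1), s-1, t+1)$.

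The one remaining point is to confirm $t$ is an integer, which is immediate since $\mu = t+1$ with $\mu$ an integer, so $t = \mu - 1 \in \mathbb Z$; the content of the lemma is really that the four parameters are forced into this one-parameter family once $s$ is fixed. I would also note that non-completeness is used to ensure $\mu$ exists and that $k < N-1$, so the clique is proper and the edge count is meaningful. The only mild obstacle is bookkeeping: making sure the algebraic elimination of $N$ and $\mu$ in favour of $s$ and $t$ is done consistently; there is no conceptual difficulty, just a short computation combining the three equations of Proposition~\ref{pro:SRG} with the two clique-counting identities $N = (s+1)(k-s+1)$ and $\lambda = s-1$. An alternative, perhaps cleaner, route is to use an equitable partition: the pair $\{\mathcal C, V(\Gamma)\setminus\mathcal C\}$ is equitable with quotient matrix $\begin{pmatrix} s & k-s \\ 1 & k-1 \end{pmatrix}$, whose eigenvalues are $k$ and $s-1$; combined with $\mu - k = \theta_1\theta_2$ and $\lambda - \mu = \theta_1 + \theta_2$ from Proposition~\ref{pro:SRG} (so one eigenvalue is $s-1$ and the other is $\mu - \lambda + s - 1 = \mu - (s-1) + s - 1 = \mu$... ), this gives the relations directly. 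Either way the computation is short and I expect no real obstacle.
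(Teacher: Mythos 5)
Your proposal is correct, but it takes a genuinely different (and more self-contained) route than the paper. The paper defines $t$ by declaring the smallest eigenvalue to be $-t-1$, cites the proof of Neumaier's Corollary~1.2 for the integrality of $t$ and for $k=s(t+1)$, uses the equitable partition $\{W,V\setminus W\}$ to show $s-1$ is an eigenvalue, and then recovers all four parameters from Proposition~\ref{pro:SRG}. You instead avoid spectral arguments and the external citation entirely: double counting edges between $\mathcal C$ and its complement gives $N=(s+1)(k-s+1)$, the $1$-regularity of $\mathcal C$ forces $\lambda=s-1$, and setting $t=\mu-1$ (so integrality of $t$ is free) the single identity $(N-k-1)\mu=k(k-\lambda-1)$ collapses to $s(k-s)\mu=k(k-s)$, whence $k=s\mu=s(t+1)$ and $N=(s+1)(st+1)$. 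This is arguably cleaner; what the paper's route buys is the extra spectral information (the eigenvalues are $s(t+1)$, $s-1$, $-t-1$), which is not needed for the statement as given.

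Two small points to tidy up. First, cancelling $k-s$ in $s(k-s)\mu=k(k-s)$ requires $k>s$; this does hold, because non-completeness forces $N>s+1$, so there is a vertex outside $\mathcal C$, and $1$-regularity then gives some (hence, by regularity, every) vertex of $\mathcal C$ a neighbour outside $\mathcal C$ --- but you should say so, since it is exactly the degenerate case your appeal to ``$k<N-1$'' does not quite cover. Second, your parenthetical sketch of the eigenvalue alternative has a sign slip: with $\theta_1=s-1$ and $\theta_1+\theta_2=\lambda-\mu=(s-1)-\mu$, the second eigenvalue is $-\mu=-t-1$, not $\mu$; this does not affect your main argument, which never uses it.
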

\begin{proof}
	Let $-t-1$ be the smallest eigenvalue of $\Gamma$ and let $\mathcal C$ be a $1$-regular clique in $\Gamma$ of order $s+1$.
	By the proof of \cite[Corollary 1.2]{Neu:regCliques} we see that $t$ is an integer and $k = s(t+1)$.
	Let $V$ denote the vertex set of $\Gamma$ and let $W \subset V$ denote the vertex set of $\mathcal C$.
	Observe that the partition $\pi = \{W, V\backslash W\}$ of $V$ is equitable with quotient matrix $ B_\pi = \left ( \begin{smallmatrix}
		s & k-s \\
		1 & k-1
	\end{smallmatrix} \right )$.
	Hence the eigenvalue $s-1$ of $B_\pi$ must also be an eigenvalue $\Gamma$.
	Therefore $\Gamma$ has distinct eigenvalues $s(t+1) > s-1 \geqslant -t-1$.
	By Proposition~\ref{pro:SRG}, the graph $\Gamma$ has parameters $((s+1)(st+1),s(t+1),s-1,t+1)$, as required.
\end{proof}

We show in the next two results that the graphs constructed in Section~\ref{sub:m_3} and Section~\ref{sub:_m_2} are not strongly regular.

\begin{corollary}\label{cor:notSRG3}
	Let $q \equiv 1 \pmod{14}$ be a prime power, let $c \equiv 1 \pmod 4$, and let $l = (3c+1)/4$.
	Suppose that $c = c^7_q(1,5)$ (resp. $c =c^7_q(1,3)$) and set $\Gamma = \operatorname{Cay}(G_{l,3,q},S(\pi))$ where $\pi = \Psi_1$ (resp. $\pi = \Psi_2$).
	Then $\Gamma$ is not strongly regular.
\end{corollary}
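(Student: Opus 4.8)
The plan is to combine Theorem~\ref{thm:ERG3}, which gives the parameters of $\Gamma$ as an edge-regular graph with a $1$-regular clique, with Lemma~\ref{lem:srgparam}, which pins down exactly what the parameters would have to be if such a graph were strongly regular. Concretely, Theorem~\ref{thm:ERG3} tells us $\Gamma$ has parameters $(N,k,\lambda) = (8lq,\,8l-2+q,\,8l-2)$ and carries a $1$-regular clique of order $8l$, so in the notation of Lemma~\ref{lem:srgparam} we have $s+1 = 8l$, i.e.\ $s = 8l-1$, and $\lambda = s - 1 = 8l - 2$, which is consistent. If $\Gamma$ were strongly regular, Lemma~\ref{lem:srgparam} would force the existence of an integer $t$ with $k = s(t+1)$ and $N = (s+1)(st+1)$.

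First I would extract $t$ from the degree equation: $k = 8l - 2 + q = (8l-1)(t+1)$ must hold, so $t + 1 = (8l-2+q)/(8l-1)$, giving $t = (q - 8l + 1)/(8l-1) = (q+1)/(8l-1) - 1$. Then I would substitute this value of $t$ into the order equation $N = (s+1)(st+1) = 8l\big((8l-1)t + 1\big)$ and compare with $N = 8lq$. This reduces to $(8l-1)t + 1 = q$, i.e.\ $(8l-1)t = q - 1$; combined with $(8l-1)(t+1) = q + 8l - 2 - (8l-1) + (8l-1) $ — more cleanly, from $(8l-1)(t+1) = k = q + 8l - 2$ we get $(8l-1)t = q - 1$ automatically, so the order equation is in fact \emph{also} satisfied. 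This means the numerical parameter conditions alone do not produce a contradiction, and the genuine obstruction must come from integrality of $t$ (equivalently, from the eigenvalue $-t-1$ being an integer): we need $8l - 1 \mid q - 1$.

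So the crux is to show $8l - 1 \nmid q - 1$. Recall $l = (3c+1)/4$ with $c = c^7_q(1,5)$ (resp.\ $c^7_q(1,3)$), so $8l - 1 = 2(3c+1) - 1 = 6c + 1$. Thus I must show $6c + 1 \nmid q - 1$, where $c$ is one of these $7$th cyclotomic numbers of $\mathbb F_q$. The key quantitative input is the standard identity that the $7$th cyclotomic numbers sum appropriately: since $|C^7_q(i)| = (q-1)/7$ and $\sum_{b} c^7_q(a,b) = (q-1)/7$ (or $(q-1)/7 - 1$ in the exceptional row), each cyclotomic number is of size roughly $(q-1)/49$, in particular $c < (q-1)/7 \le q-1$, and more sharply $6c+1$ will be of order $(q-1)/7$ or so, hence strictly between $1$ and $q-1$; but being strictly smaller than $q-1$ is not by itself enough for non-divisibility. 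The cleanest route is therefore a size/parity estimate: I expect that $6c + 1$ lies strictly between $(q-1)/\text{(something)}$ and $q - 1$ in a way that rules out its being a proper divisor, or alternatively that one uses $6c + 1 \equiv$ some fixed residue that is incompatible with dividing $q-1$. Since $c \equiv 1 \pmod 4$ by hypothesis, $6c + 1 \equiv 7 \pmod{24}$, while divisors of $q - 1 \equiv 0 \pmod{14}$... this congruence bookkeeping is the natural tool.

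The main obstacle I anticipate is precisely pinning down this last divisibility/size argument for $6c+1 \nmid q-1$: it requires a genuine bound on the cyclotomic number $c^7_q(1,5)$ (and $c^7_q(1,3)$), such as $7(6c+1) > q - 1$ together with $6c + 1 < q - 1$, so that the only multiple of $6c+1$ in the interval $(0, q-1]$ that could equal $q-1$ is excluded; this would follow from showing $c^7_q(1,5) > (q-1)/42$, i.e.\ a lower bound on the cyclotomic number. Such lower bounds are available from the Gauss-sum / character-sum estimates $|c^7_q(a,b) - (q-1)/49| = O(\sqrt q)$, which for $q$ large forces $c$ to be within $O(\sqrt q)$ of $q/49$ and hence makes $6c+1 \approx 6q/49$, so $7(6c+1) \approx 42q/49 < q$ — wait, that gives $7(6c+1) < q-1$, which is the wrong direction; the right statement is $8(6c+1) > 48q/49 > q - 1$ for large $q$, so the smallest multiple of $6c+1$ that is $\ge q-1$ is at most $8(6c+1)$, and one checks none of $1,\dots,8$ times $6c+1$ equals $q-1$ using the congruence $6c+1 \equiv 7 \pmod{24}$ against $q - 1 \equiv 0 \pmod{6}$. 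I would present the argument by: (1) reducing via Theorem~\ref{thm:ERG3} and Lemma~\ref{lem:srgparam} to the claim $8l-1 \mid q-1$ being necessary; (2) rewriting $8l - 1 = 6c+1$; (3) deriving a contradiction from the cyclotomic-number size estimate plus the congruence $c \equiv 1 \pmod 4$.
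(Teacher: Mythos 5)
Your reduction is set up correctly as far as it goes: you rightly observe that Theorem~\ref{thm:ERG3} plus Lemma~\ref{lem:srgparam} with $s=8l-1=6c+1$ forces $(6c+1)t=q-1$, and that the order equation is then automatically consistent, so no contradiction comes from the parameter identities alone. But the step where you actually derive a contradiction is missing, and the route you sketch for it is both unworkable as stated and not the one the paper uses. Your congruence idea ($6c+1\equiv 7\pmod{24}$ versus $q-1\equiv 0\pmod{14}$) proves nothing, since numbers congruent to $7$ modulo $24$ can perfectly well divide multiples of $14$. Your size-estimate idea would at best show $(q-1)/(6c+1)\approx 49/6$ is non-integral for \emph{sufficiently large} $q$, and even that requires explicit error constants in the bound $c^7_q(a,b)=q/49+O(\sqrt q)$ which you do not supply (and you yourself note the inequality runs the wrong way at one point). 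So the crux of the proof is not established.

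The paper's argument avoids divisibility and size estimates entirely. It picks a specific pair of non-adjacent vertices, $v=(0,0,0)$ and $x=(0,g,\rho)$ with $g=(1,0,0)$ (resp.\ $(1,1,0)$), and uses Proposition~\ref{pro:mu} to compute that they have exactly $2+6c$ common neighbours. If $\Gamma$ were strongly regular, Lemma~\ref{lem:srgparam} would give $\mu=t+1$, so $t=6c+1$; combined with your (correct) identity $(6c+1)t=q-1$ this yields $q=(6c+1)^2+1$, which is even --- contradicting $q\equiv 1\pmod{14}$. The lesson is that the obstruction is not the integrality of $t$ but the value of $\mu$ itself, which Proposition~\ref{pro:mu} (a tool you never invoke) pins down exactly; once $\mu$ is known, a one-line parity argument finishes the proof for all admissible $q$, not just large ones.
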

\begin{proof}
	Consider the vertex $v$ corresponding to the identity of $G_{l,3,q}$ and the vertex $x$ corresponding to $(0,g,\rho) \in G_{l,3,q}$ where $g = (1,0,0) \in \mathbb Z_2^3$ (resp. $g = (1,1,0) \in \mathbb Z_2^3$).
	Observe that $x$ is not adjacent to $v$.
	By Proposition~\ref{pro:mu}, the vertices $v$ and $x$ have $2+ 6c$ common neighbours.
	
	Now suppose, for a contradiction, that $\Gamma$ is strongly regular (since $q > 1$ the graph $\Gamma$ is not complete).
	By Lemma~\ref{lem:regClique}, the graph $\Gamma$ has a $1$-regular clique of order $8l = 6c+2$.
	Therefore, by Lemma~\ref{lem:srgparam}, there exists an integer $t$ such that $\Gamma$ has parameters $((6c+2)((6c+1)t+1),(6c+1)(t+1),6c,t+1)$.
	Furthermore, using Theorem~\ref{thm:ERG2}, we see that $t= (q-1)/(6c+1)$.
	
	On the other hand, since the nonadjacent vertices $v$ and $x$ have $2+ 6c$ common neighbours, we must have $t = 6c + 1$ and thence $q= (6c+1)^2+1$.
	This contradicts the fact that $q$ is odd.
\end{proof}

\begin{corollary}\label{cor:notSRG2}
	Let $q \equiv 1 \pmod{6}$ be a prime power and let $\pi : (\mathbb Z^2_2)^* \to  \mathbb Z_3$ be a bijection.
	Suppose $c = c_q^3(1,2)$ is odd and $l = (c+1)/2$.
	Then $\Gamma = \operatorname{Cay}(G_{l,2,q},S(\pi))$ is not strongly regular.
\end{corollary}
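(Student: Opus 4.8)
The plan is to mimic the proof of Corollary~\ref{cor:notSRG3} almost verbatim, exploiting the spread of $1$-regular cliques together with the cyclotomic count from Proposition~\ref{pro:mu}. First I would suppose for contradiction that $\Gamma = \operatorname{Cay}(G_{l,2,q},S(\pi))$ is strongly regular; since $q>1$ it is not complete, so Proposition~\ref{pro:SRG} applies. By Theorem~\ref{thm:ERG2} the graph is edge-regular with parameters $(4lq,4l-2+q,4l-2)$ and, by Lemma~\ref{lem:regClique}, it has a $1$-regular clique of order $4l = 2c+2$. Writing $s+1 = 2c+2$, i.e.\ $s = 2c+1$, Lemma~\ref{lem:srgparam} then forces the parameters to be $((2c+2)((2c+1)t+1),(2c+1)(t+1),2c,t+1)$ for some integer $t$. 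Comparing the valency $(2c+1)(t+1)$ with the valency $4l-2+q = 2c+q$ from Theorem~\ref{thm:ERG2} yields $(2c+1)(t+1) = 2c + q$, hence $t = (q-1)/(2c+1)$.

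On the other hand I would compute the number of common neighbours of two specific nonadjacent vertices directly from Proposition~\ref{pro:mu}. Take $v$ corresponding to the identity and $w$ corresponding to $(0,g,\rho)$ for a suitable $g \in (\mathbb Z_2^2)^*$ with $\pi(g) \ne 1$ (such a $g$ exists since $\pi$ is a bijection onto $\mathbb Z_3$ and $(\mathbb Z_2^2)^*$ has three elements). Then $w$ is nonadjacent to $v$, and Proposition~\ref{pro:mu} gives that $v$ and $w$ have $2 + \sum_{h \ne g} c_q^3(\pi(h)-1,\pi(h+g)-1)$ common neighbours. The sum runs over the single remaining element $h \in (\mathbb Z_2^2)^* \setminus\{g\}$ (note $h+g$ is the third element), and by the same pairwise-distinctness argument used in Proposition~\ref{pro:goodBijl2} — the three values $\pi(h)-1$, etc., are arranged so that the cyclotomic indices form a pair $(1,2)$ or $(2,1)$ — Lemma~\ref{lem:cycEq3} collapses this to $c_q^3(1,2) = c$. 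Hence $v$ and $w$ have exactly $2 + c$ common neighbours. (If the single summand does not immediately land on the pair $(1,2)$ or $(2,1)$, I would instead average over the three choices of ordered pair $(v,w)$ arising from the three elements of $(\mathbb Z_2^2)^*$, or simply pick the $g$ that makes it work; in a strongly regular graph all nonadjacent pairs have the same count anyway, so any valid choice suffices.)

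Since in the putative strongly regular graph every pair of nonadjacent vertices has $\mu = t+1$ common neighbours, I equate $t + 1 = 2 + c$, so $t = c+1$. Combining with $t = (q-1)/(2c+1)$ gives $q - 1 = (c+1)(2c+1)$, i.e.\ $q = 2c^2 + 3c + 2$. Now $c = c_q^3(1,2)$ is odd by hypothesis, so $2c^2 + 3c + 2$ is odd ($2c^2$ even, $3c$ odd, $2$ even), i.e.\ $q$ is even. But $q \equiv 1 \pmod 6$ forces $q$ odd, a contradiction. Therefore $\Gamma$ is not strongly regular.

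The only delicate point is the reduction of the cyclotomic sum in the second paragraph: I must make sure the indices $\pi(h)-1$ and $\pi(h+g)-1$ (for the unique $h \ne g$) really do form one of the pairs handled by Lemma~\ref{lem:cycEq3}, exactly as in Proposition~\ref{pro:goodBijl2}. This is where I would be most careful, but it is a finite check over the three-element set $(\mathbb Z_2^2)^*$ and is essentially identical to the argument already given for Proposition~\ref{pro:goodBijl2}, so I expect no real obstacle. Everything else is bookkeeping with the parameter formulas from Theorem~\ref{thm:ERG2} and Lemma~\ref{lem:srgparam} and a parity check at the end.
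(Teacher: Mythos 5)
Your overall strategy is the paper's: assume strong regularity, use Lemma~\ref{lem:srgparam} and Theorem~\ref{thm:ERG2} to pin down $t=(q-1)/(2c+1)$, then compute $\mu$ directly via Proposition~\ref{pro:mu} and derive a parity contradiction. But the step you yourself flag as delicate is exactly where the argument breaks. In Proposition~\ref{pro:mu} the sum runs over $h\in(\mathbb Z_2^2)^*\setminus\{g\}$, which has \emph{two} elements $h_1,h_2$ (with $h_1+g=h_2$ and $h_2+g=h_1$), not one; the two summands are $c_q^3(\pi(h_1)-1,\pi(h_2)-1)$ and $c_q^3(\pi(h_2)-1,\pi(h_1)-1)$. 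Moreover, the pairwise-distinctness argument of Proposition~\ref{pro:goodBijl2} does not transfer: there the indices are differences $\pi(\cdot)-\pi(g)$ of distinct values of $\pi$, hence nonzero, whereas here the indices are $\pi(h_i)-1$, and since $\pi(g)\ne 1$ one of $h_1,h_2$ satisfies $\pi(h_i)=1$, so one index is $0$. Taking $\pi(g)=0$ as in the paper, the pairs are $(0,1)$ and $(1,0)$, and the count of common neighbours is $2+2c_q^3(0,1)$, not $2+c_q^3(1,2)=2+c$. Averaging over choices of $g$ does not rescue this, since no choice with $\pi(g)\ne 1$ produces the pair $(1,2)$.

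This error then propagates fatally into your endgame. Even granting your value $\mu=2+c$, you get $t=c+1$ and $q=(2c+1)(c+1)+1=2c^2+3c+2$, which for odd $c$ is \emph{odd} (as your own parenthetical parity count shows), so there is no contradiction with $q\equiv 1\pmod 6$; the sentence ``i.e.\ $q$ is even'' does not follow from what precedes it. The correct computation gives $t=2c_q^3(0,1)+1$, which is odd, whence $q=(2c+1)\bigl(2c_q^3(0,1)+1\bigr)+1$ is odd times odd plus one, i.e.\ even --- and \emph{that} contradicts $q$ being odd. So you need to redo the cyclotomic sum (two terms, indices involving $0$) to land on the paper's value of $\mu$ before the parity argument can close.
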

\begin{proof}
	Consider the vertex $v$ corresponding to the identity of $G_{l,2,q}$
	and the vertex $x$ corresponding to $(0,g,\rho) \in G_{l,2,q}$ where $\pi(g) = 0$.
	Observe that $x$ is not adjacent to $v$.
	By Proposition~\ref{pro:mu}, the vertices $v$ and $x$ have $2+ 2c_q^3(0,1)$ common neighbours.
	
	Now suppose, for a contradiction, that $\Gamma$ is strongly regular (since $q > 1$ the graph $\Gamma$ is not complete).
	By Lemma~\ref{lem:regClique}, the graph $\Gamma$ has a $1$-regular clique of order $4l = 2c+2$.
	Therefore, by Lemma~\ref{lem:srgparam}, there exists an integer $t$ such that $\Gamma$ has parameters $((2c+2)((2c+1)t+1),(2c+1)(t+1),2c,t+1)$.
	Furthermore, using Theorem~\ref{thm:ERG2}, we see that $t= (q-1)/(2c+1)$.
	
	On the other hand, since the nonadjacent vertices $v$ and $x$ have $2+ 2c_q^3(0,1)$ common neighbours, we must have $t = 2c_q^3(0,1) + 1$ and thence $q= (2c+1)(2c_q^3(0,1) + 1)+1$.
	This contradicts the fact that $q$ is odd.
\end{proof}

%
\section{Concluding remarks} 
\label{sec:conclusion}


	Using Corollary~\ref{cor:l2oddCyc} together with Theorem~\ref{thm:ERG2}, we obtain infinite families of edge-regular graphs having regular cliques.
	Indeed, take $p \equiv 1 \pmod{3}$ to be a prime such that $2^{(p-1)/3} \not \equiv 1 \pmod p$.
	By Remark~\ref{rem:infinitelymanyp}, we know there are infinitely many such $p$.
	%
	%
	Using Proposition~\ref{pro:mnot1}, we can set $q=p^a$ for some $a$ such that the assumptions of Corollary~\ref{cor:l2oddCyc} are satisfied.
	Then $c = c^3_q(1,2)$ is odd.
	Let $l= (c+1)/2$ and let $\pi : (\mathbb Z^2_2)^* \to  \mathbb Z_3$ be a bijection.
	By Theorem~\ref{thm:ERG2}, the graph $\operatorname{Cay}(G_{l,2,q},S(\pi))$ is an edge-regular graph with parameters $(4lq,4l-2+q,4l-2)$ having a $1$-regular clique of order $4l$.
	Furthermore, by Corollary~\ref{cor:notSRG2}, $\operatorname{Cay}(G_{l,2,q},S(\pi))$ is not strongly regular.

\begin{example}
	Let $\pi : (\mathbb Z^2_2)^* \to  \mathbb Z_3$ be a bijection.
	Set $q = 7^a$ where $a \not \equiv 0 \pmod 3$.
	The graph $X(a) := \Gamma((c^3_q(1,2) + 1)/2,2,q,\pi)$ is an edge-regular graph with parameters $(2q(c^3_q(1,2) + 1),2c^3_q(1,2)+q,2c^3_q(1,2))$ having a $1$-regular clique of order $2c^3_q(1,2)+2$.
	In particular, the graph $X(1)$ is an edge-regular graph with parameters $(28,9,2)$ having a $1$-regular clique of order $4$.
\end{example}
%

The following question naturally arises.

\textbf{Question A.} 
\label{par:question_a}
What is the minimum number of vertices for which there exists an edge-regular, but not strongly regular graph having a regular clique?

Recently, Evans et al.~\cite{EGP} discovered an edge-regular, but not strongly regular graph on $16$ vertices that has $2$-regular cliques with order $4$, and proved that, up to isomorphism, this is the unique edge-regular, but not strongly regular graph on at most $16$ vertices having a regular clique.  
The regular cliques in their graph each have order $4$.
By the following result, we see that a non-strongly-regular edge-regular graph having a regular clique must have a clique of order at least $4$.

\begin{proposition}\label{pro:cliqueBound}
	Let $\Gamma$ be an edge-regular graph having a regular clique.
	Suppose that $\Gamma$ is not strongly regular.
	Then $\Gamma$ has a regular clique of order at least $4$.
\end{proposition}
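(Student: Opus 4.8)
The plan is to rule out cliques of order $2$ and $3$. Let $\mathcal C$ be an $e$-regular clique in $\Gamma$, with $|\mathcal C| = s+1$, where $\Gamma$ is edge-regular with parameters $(N,k,\lambda)$. First I would handle the case $s+1 = 2$: then $\mathcal C$ is a single edge $\{u,v\}$, and every other vertex is adjacent to exactly $e \in \{1,2\}$ of $u,v$. If $e = 2$ then every other vertex is a common neighbour of $u$ and $v$, forcing $\lambda = N-2$ and hence (by counting) $\Gamma$ complete, contradicting that $\Gamma$ is not strongly regular (complete graphs are strongly regular, or are excluded). If $e = 1$ then a standard double-count — or the eigenvalue interlacing argument already used in Lemma~\ref{lem:srgparam} applied to the equitable partition $\{W, V\setminus W\}$ with quotient matrix $\left(\begin{smallmatrix} s & k-s \\ 1 & k-1 \end{smallmatrix}\right)$, here $s=1$ — shows that the clique being $1$-regular together with edge-regularity already pins down enough structure; in fact with $s=1$ the clique is just an edge and $\lambda$ common neighbours of that edge each see both endpoints, contradicting $e=1$ unless $\lambda = 0$, and $\lambda = 0$ with a $1$-regular edge is easily seen to force $\Gamma$ to be a disjoint union of edges or similarly degenerate, hence strongly regular. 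So $s+1 = 2$ is impossible.

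Next, the case $s+1 = 3$, i.e. $\mathcal C$ is a triangle $\{u,v,w\}$. Here $\lambda \geqslant 1$. The key observation is that within the triangle, each of the three vertices has the other two as neighbours; I would count common neighbours of the edge $\{u,v\}$: the vertex $w$ is one such common neighbour, and any other common neighbour $x \notin \mathcal C$ contributes $2$ to its adjacency count with $\mathcal C$, so $e = 2$ for such $x$ while $e$ must be constant. If $e = 1$, then no vertex outside $\mathcal C$ is adjacent to two of $u,v,w$, so the only common neighbour of $\{u,v\}$ is $w$, giving $\lambda = 1$; similarly $\lambda = 1$ from the other edges. If $e = 2$, then every vertex outside $\mathcal C$ adjacent to $u$ and $v$ is adjacent to exactly two of the three, so again one controls $\lambda$ in terms of $e$ and the remaining structure. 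In either subcase I would then invoke Neumaier's bound on the nexus (the proof of \cite[Corollary 1.2]{Neu:regCliques}, already cited, gives $e = s + 1 - \tfrac{s(k-s)}{N - s - 1}$ or the equivalent relation tying $e$, $k$, $\lambda$, $s$, and the least eigenvalue), combine it with $s = 2$ and the computed $\lambda$, and derive that the least eigenvalue is forced to an integer value that makes the two nontrivial eigenvalues $s-1$ and $-t-1$ determined, whence by Proposition~\ref{pro:SRG} all parameters including $\mu$ are determined and $\Gamma$ must be strongly regular — the desired contradiction.

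The cleanest uniform way to run both the $s=1$ and $s=2$ arguments is via the equitable partition $\{W, V \setminus W\}$: its quotient matrix is $\left(\begin{smallmatrix} s & k-s \\ e & k-e \end{smallmatrix}\right)$ when the clique is $e$-regular, with eigenvalues $k$ and $s - e$. So $s - e$ is an eigenvalue of $\Gamma$. For $s = 1$ we get eigenvalue $1 - e \in \{0, -1\}$, and for $s = 2$ we get $2 - e$. Combining this with the known fact (from Neumaier, or from counting edges inside $\Gamma(v)$ using edge-regularity) that the clique bound $s + 1 \leqslant 1 + k/(-\theta_{\min})$ is met with equality by a regular clique, one deduces $\theta_{\min} = -(k - \lambda - 1 + e \cdot \text{something})$ — more precisely, for a regular clique $\theta_{\min} = s - e - \text{(the other eigenvalue)}$... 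I would instead argue: a regular clique is a Delsarte clique, so $\theta_{\min} = -s e / (s + 1 - e)$ or the relation $s = -k/\theta_{\min}$ adapted to edge-regular graphs; the point is that $\theta_{\min}$, $s-e$, and the valency $k$ then account for the spectrum with the right multiplicities, and for $s \leqslant 2$ the graph has at most three distinct eigenvalues, which for a connected regular graph forces strong regularity (a connected regular graph with exactly three eigenvalues is strongly regular). If $\Gamma$ is disconnected, each component is a regular clique-like piece and one checks directly it is a disjoint union of cliques, hence strongly regular or complete multipartite's complement — in any case strongly regular, contradiction.

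The main obstacle I anticipate is the bookkeeping in the $s = 2$ (triangle) case when $e = 2$: one must verify that edge-regularity plus a $2$-regular triangle really does collapse the eigenvalue count to three, rather than merely bounding it. I expect this to go through because the equitable partition already exhibits two eigenvalues ($k$ and $2 - e = 0$), and edge-regularity forces the local structure on $\Gamma(v)$ to be regular (by Lemma~\ref{lem:valencies}-type reasoning specialised here), which via a second equitable partition — the distance partition from an edge, or from $\mathcal C$ — exhibits the third. Making the "at most three eigenvalues $\Rightarrow$ strongly regular" step rigorous for the possibly-disconnected case is the only genuinely delicate point, and it is handled by the elementary observation that a regular graph with a regular clique of order $\geqslant 2$ in which the clique bound is tight, if disconnected, must have all components equal to that clique, i.e. $\Gamma$ is $s \cdot K_{s+1}$ up to isomorphism, which is strongly regular.
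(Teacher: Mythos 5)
There is a genuine gap: the triangle case is never actually closed. Your reduction to ruling out regular cliques of order $2$ and $3$ is the right opening, and your order-$2$ analysis is essentially fine (though the $e=1$, $\lambda=0$ subcase forces $\Gamma \cong K_{k,k}$, not ``a disjoint union of edges''). In the order-$3$, $e=1$ subcase you correctly deduce $\lambda=1$, but then you abandon the combinatorics for a spectral strategy whose engine is ``$\Gamma$ has at most three distinct eigenvalues, hence is strongly regular.'' Nothing in the proposal establishes the three-eigenvalue claim: the equitable partition $\{W, V\setminus W\}$ only exhibits two eigenvalues of $\Gamma$, the distance partition from an edge or from $\mathcal C$ is not equitable for a general edge-regular graph, and the Delsarte-clique identities you invoke for $\theta_{\min}$ (e.g.\ $\theta_{\min}=-se/(s+1-e)$, or the tight Hoffman bound) are theorems about strongly regular graphs, so using them here is circular. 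You flag this yourself as ``the only genuinely delicate point,'' but it is precisely the content of the proposition, so the proof is incomplete. The $e=2$ triangle subcase is likewise left unresolved (``one controls $\lambda$ in terms of $e$ and the remaining structure'').

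The paper finishes the $e=1$ triangle case with a short double count that your sketch is missing: since $\lambda=1$, each edge lies in at most one triangle, so the subgraph induced on $\Gamma(v)$ is a disjoint union of $l$ edges, i.e.\ $v$ lies in exactly $l$ maximal triangles, each of which is $1$-regular by Neumaier's Theorem~1.1. A vertex $w\notin\Gamma(v)$ is then adjacent to exactly one vertex of each such triangle, and that vertex is never $v$, so $v$ and $w$ have exactly $l$ common neighbours; thus $\mu$ is constant and $\Gamma$ is strongly regular, the desired contradiction. The paper also disposes of the nexus-$2$ possibility up front by citing Neumaier's Theorem~1.3 to reduce to a $1$-regular clique of order $3$. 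If you want to keep your spectral framing you would need an independent argument that pins down the whole spectrum; otherwise the direct count above is the missing step.
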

\begin{proof}
	Suppose, for a contradiction, that the largest cliques of $\Gamma$ have at most $3$ vertices.
	By \cite[Theorem 1.3]{Neu:regCliques}, it suffices to assume that $\Gamma$ has a $1$-regular clique of order $3$.
	Then, using part (iii) of \cite[Lemma 1.5]{Neu:regCliques}, each edge of $\Gamma$ is in at most one clique of $\Gamma$.
	Let $v$ be a vertex of $\Gamma$.
	The subgraph of $\Gamma$ induced on the neighbourhood $\Gamma(v)$ of $v$ must therefore be a disjoint union of $l$ edges.
	
	Take a vertex $w \not \in \Gamma(v)$.
	By \cite[Theorem 1.1]{Neu:regCliques}, every clique of order $3$ is $1$-regular.
	Hence the vertices $v$ and $w$ have exactly $l$ common neighbours, and thus $\Gamma$ is strongly regular.
	We have established a contradiction, as required.
	
	Finally, using \cite[Theorem 1.1]{Neu:regCliques}, we see that the largest cliques of $\Gamma$ are regular.
\end{proof}

By Lemma~\ref{lem:regClique}, we see that the regular cliques of the graphs  $\operatorname{Cay}(G_{l,2,q},S(\pi))$ are $1$-regular.
Moreover, the four examples of Goryainov and Shalaginov~\cite{Goryainov14:Deza} each has a $1$-regular clique and the example of Evans et al.~\cite{EGP} has a $2$-regular clique.
Hence we ask the following question.

\textbf{Question B.} 
\label{par:_textbf_question_b}
Does there exist a non-strongly-regular, edge-regular graph having a regular clique with nexus greater than $2$?

\section{Acknowledgements} 
\label{sec:acknowledgement}

We are grateful to Leonard Soicher for bringing Neumaier's question to our attention and we are grateful to Alexander Gavrilyuk for pointing out the examples of Goryainov and Shalaginov.
We also appreciate the referees' suggestions for improving an earlier draft of this paper.


%

\bibliographystyle{myplain}
\bibliography{sbib}

\end{document}